\definecolor{vertFonce}	{rgb}{0,0.5,0}
\definecolor{numLignes}	{rgb}{0.17,0.57,0.7}	
\definecolor{gris}		{rgb}{0.5,0.5,0.5}
\definecolor{grisFonce}	{rgb}{0.2,0.2,0.2}
\definecolor{orange}	{rgb}{1,0.65,0.31}		
\definecolor{orangeFonce}{rgb}{1,0.4,0}
\definecolor{bleuFonce}	{rgb}{0,0,0.4}
\definecolor{rougeFonce}{rgb}{0.3,0,0}
\definecolor{rougeWord}	{rgb}{0.5,0,0}
\definecolor{vertClair}	{rgb}{0.8,1,0.8}
\definecolor{rougeClair}{rgb}{1,0.5,0.5}
\definecolor{violet}	{rgb}{0.5,0,0.5}
\newtheorem{lem}{Lemma}[section]
\newtheorem{theorem}{Theorem}[section]
\newtheorem{prop}{Proposition}[section]
\newtheorem{remark}{Remark}[section]
\newcommand{\step}[1]	{\paragraph{\itshape\bfseries #1.}}
\newcommand		{\subsetArrow}	{\mathrel{\ooalign{$\subset$\cr%
\hidewidth\raise-.087ex\hbox{$_\shortrightarrow\mkern-1.5mu$}\cr}}}
\newcommand		{\subsetarrow}	{\mathrel{\ooalign{$\subset$\cr%
\hidewidth\raise-1.45ex\hbox{$\vec{}\mkern6mu$}\cr}}}
\newcommand		{\N}		{\mathbb N}			
\newcommand		{\RR}		{\mathbb R}			
\newcommand		{\R}		{\RR}
\newcommand		{\Z}		{\mathbb Z}
\newcommand		{\Rd}		{\R^3}
\newcommand		{\RRd}		{\R^6}
\newcommand		{\Rdd}		{\R^6}
\renewcommand	{\L}		{\mathcal L}		
\newcommand		{\cW}		{\mathcal W}		
\newcommand		{\cL}		{\mathcal L}		
\newcommand		\sfm 		{\mathsf m}
\newcommand     {\ecF}      {\mathscr{F}}
\newcommand		{\lt}			{\left}				%
\newcommand		{\rt}			{\right}			%
\renewcommand	{\(}			{\lt(}
\renewcommand	{\)}			{\rt)}
\newcommand		{\bangle}[1]	{\lt\langle #1\rt\rangle}
\newcommand		{\weight}[1]	{\bangle{#1}}	
\newcommand		{\com}[1]		{\lt[{#1}\rt]}		
\newcommand		{\bcom}[1]		{\big[{#1}\big]}
\newcommand		{\n}[1]			{\lt\lvert #1 \rt\rvert}
\newcommand		{\nrm}[1]		{\lt\lVert #1\rt\rVert}
\newcommand		{\bnrm}[1]		{\big\lVert #1\big\rVert}
\newcommand		{\Bnrm}[1]		{\Big\lVert #1\Big\rVert}
\newcommand		{\Nrm}[2]		{\nrm{#1}_{#2}}
\newcommand		{\bNrm}[2]		{\bnrm{#1}_{#2}}
\newcommand		{\BNrm}[2]		{\Bnrm{#1}_{#2}}
\renewcommand		{\d}		{\mathrm{d}}		
\newcommand			{\dd}		{\,\d}				
\newcommand			{\bd}		{\partial}			
\newcommand			{\dpt}		{\partial_t}
\newcommand			{\dt}		{\frac{\d}{\d t}}	
\newcommand			{\grad}		{\nabla}
\newcommand			{\lapl}		{\Delta}
\newcommand			{\Dx}		{\nabla_x}
\newcommand			{\Dv}		{\nabla_\xi}
\newcommand			{\Id}		{\mathrm{Id}}		
\DeclareMathOperator{\cF}		{\mathcal{F}}		
\DeclareMathOperator{\tr}		{Tr}				
\DeclareMathOperator{\diag}		{diag}
\newcommand		{\F}[1]			{\cF\!\( #1 \)}		
\newcommand		{\Tr}[1]		{\tr\!\( #1 \)}		
\newcommand		{\Diag}[1]		{\diag\!\( #1 \)}
\newcommand		{\intd}			{\int_{\Rd}}
\newcommand		{\intdd}		{\int_{\RRd}}
\newcommand		{\iintd}		{\iint_{\RRd}}
\newcommand		{\iintdd}		{\iint_{\RRd\times\RRd}}
\newcommand		{\ii}			{\mathrm{i}}	
\newcommand		{\init}			{\mathrm{in}}
\newcommand		{\eps}			{\varepsilon}
\newcommand		{\cC}			{\mathcal{C}}
\newcommand		{\Inprod}[2]	{\Braket{#1 | #2}}
\newcommand		{\op}		{\boldsymbol{\rho}}	
\newcommand{\bp}{\boldsymbol{p}}
\newcommand		{\opm}		{\sfm}				
\newcommand		{\opmu}		{\boldsymbol{\mu}}	
\newcommand		{\opnu}		{\boldsymbol{\nu}}	
\newcommand		{\opp}		{\boldsymbol{p}}
\newcommand		{\Dh}		{\boldsymbol{\nabla}}	
\newcommand		{\Dhx}[1]	{\Dh_{\!x} #1}			
\newcommand		{\Dhv}[1]	{\Dh_{\!\xi} #1}		
\title[On the $L^2$ rate of convergence]{On the $L^2$ Rate of Convergence in the Limit from the Hartree to the Vlasov--Poisson Equation}
\author[J. Chong]{Jacky J. Chong}
\author[L. Lafleche]{Laurent Lafleche}
\address[J. Chong, L. Lafleche]{Department of Mathematics, The University of Texas at Austin, Austin, TX 78712, USA, {\tt jwchong@math.utexas.edu}, {\tt lafleche@math.utexas.edu}}
\author[C. Saffirio]{Chiara Saffirio}
\address[C. Saffirio]{Department of Mathematics and Computer Science, University of Basel, 4051 Basel, Switzerland, {\tt chiara.saffirio@unibas.ch}}
\subjclass[2010]{81Q20, 35Q55, 35Q83 (82C10, 82C05).}
\keywords{semiclassical limit, Hartree equation, Vlasov equation, Coulomb potential, gravitational potential.}
\begin{document}
\begin{abstract}
	Using a new stability estimate for the difference of the square roots of two solutions of the Vlasov--Poisson equation, we obtain the convergence in the $L^2$ norm of the Wigner transform of a solution of the Hartree equation with Coulomb potential to a solution of the Vlasov--Poisson equation, with a rate of convergence proportional to $\hbar$. This improves  the $\hbar^{3/4-\varepsilon}$ rate of convergence in $L^2$ obtained in [L.~Lafleche, C.~Saffirio: \textit{Analysis \& PDE}, to appear]. Another reason of interest of this paper is the new method, reminiscent of the ones used to prove the mean-field limit from the many-body Schr\"odinger equation towards the Hartree--Fock equation for mixed states.
\end{abstract}

\maketitle

\bigskip

\renewcommand{\contentsname}{\centerline{Table of Contents}}
\setcounter{tocdepth}{2}	
\tableofcontents

\section{Introduction and main result}

	In this paper, we study the limit as $\hbar\to 0$ of the Hartree equation towards the Vlasov--Poisson equation. We focus on the cases of the three dimensional Coulomb and gravitational interaction potentials 
	\begin{equation}\label{eq:K}
		K(x) = \frac{\pm 1}{4\pi\n{x}},
	\end{equation}
	which are the most relevant singular potentials from a physical viewpoint and the most challenging from a mathematical viewpoint.

	We consider the Hartree equation
	\begin{equation}\label{eq:Hartree}
		i\hbar\,\dpt \op = \com{H_{\op},\op} \quad \text{ with } \quad \op(0)= \op^{\init},
	\end{equation}
	where $\hbar=\frac{h}{2\pi}$ is the reduced Planck constant, $\op$ is a time-dependent positive self-adjoint trace class operator acting on $L^2(\Rd)$, and $H_{\op} = -\frac{\hbar^2\Delta}{2} + V_{\op}$ is the Hartree Hamiltonian. The operator $V_{\op}$ is the operator of multiplication by the mean-field potential $V_{\op}(x) = (K*\rho)(x)$ associated to the two-body interaction potential $K$ defined in our case by Equation~\eqref{eq:K}, and $\rho(x)$ is the quantum spatial density defined as the scaled restriction to the diagonal of the integral kernel of $\op$,
	\begin{equation*}
		\rho(x)=\diag(\op)(x) := h^3\op(x,x),
	\end{equation*}
	where we adopt the same notation to denote both the operator $\op$ and its integral kernel $\op(x,y)$. The classical analogue of Equation~\eqref{eq:Hartree} is the Vlasov--Poisson equation given by
	\begin{equation}\label{eq:Vlasov}
		\dpt f + \xi\cdot\Dx f + E_f \cdot\Dv f = 0 \quad \text{ with } \quad f(0, x, \xi)= f^{\init}(x, \xi)\ge 0,
	\end{equation}
	where $E_f = -\nabla V_f$ is the self-consistent force field associated to the mean-field potential $V_f(x)=(K*\rho_f)(x)$ with $\rho_f$ the spatial density given by
	\begin{equation*}
		\rho_f(x)=\intd f(t,x,\xi) \dd\xi.
	\end{equation*}
	
	For the well-posedness of the Hartree and the Vlasov--Poisson equations we refer to \cite{castella_solutions_1997} and \cite{pfaffelmoser_global_1992, lions_propagation_1991} respectively and references therein.
	
	Equation~\eqref{eq:Vlasov} can be seen as the semiclassical approximation of a system of many interacting quantum particles, as pointed out in the pioneering works by Narnhofer and Sewell~\cite{narnhofer_vlasov_1981} and by Spohn~\cite{spohn_vlasov_1981} where the Vlasov equation was obtained directly from the many-body Schr\"odinger equation with smooth interaction in the combined mean-field and semiclassical regime. This has been reconsidered in~\cite{graffi_meanfield_2003} and more recently in~\cite{chong_manybody_2021, chen_combined_2021}, where the case of the Coulomb potential with a $N$ dependent cut-off has been addressed. Moreover, a combined mean-field and semiclassical limit for particles interacting via the Coulomb potential has been treated in~\cite{golse_meanfield_2021} for factorized initial data whose first marginal is given by a monokinetic Wigner measure (that can be seen as the Klimontovich solutions to the Vlasov equation), which leads to the pressureless Euler--Poisson system.
	
	Most of the above mentioned works rely on compactness methods that do not allow for an explicit bound on the rate of convergence, which is essential for applications. For this reason the Hartree equation~\eqref{eq:Hartree} has been considered as an intermediate step to decouple the problem into two separate parts, namely to prove the convergence of the mean-field limit from the many-body Schr\"odinger equation towards the Hartree equation, and then the semiclassical limit from the Hartree equation to the Vlasov equation. In this paper, we are interested in the latter problem, that has been largely studied in different settings. It was first proven by Lions and Paul in~\cite{lions_mesures_1993}, and later in~\cite{markowich_classical_1993,figalli_semiclassical_2012}, that the Wigner transforms of the solutions of the Hartree equation~\eqref{eq:Hartree} converge in some weak sense to solutions of the Vlasov--Poisson equation. Quantitative rates of convergence were then obtained, first in the case when the Coulomb potential is replaced by a smoother potential, in Lebesgue-type norms~\cite{athanassoulis_strong_2011, amour_semiclassical_2013, benedikter_hartree_2016} and in a quantum analogue of the Wasserstein distances \cite{golse_schrodinger_2017}. The case of singular interactions was then treated in~\cite{lafleche_propagation_2019, lafleche_global_2021} with the same quantum Wasserstein distances, and in~\cite{saffirio_semiclassical_2019, saffirio_hartree_2020, lafleche_strong_2021} in Lebesgue-type norms. In particular, for $K$ as in Equation~\eqref{eq:K}, the explicit rate has been established in~\cite{lafleche_propagation_2019} for the weak topology and in~\cite{saffirio_hartree_2020, lafleche_strong_2021} for the Schatten norms.
	
	In a different setting, the semiclassical limit has also been studied for local perturbations of stationary states in the case of infinite gases in~\cite{lewin_hartree_2020}.

	Our goals here are twofold.~On the one hand, we want to improve the rate of convergence in the $L^2$ norm in terms of $\hbar$ for the convergence of the Wigner transform of the solution of the Hartree equation to the Vlasov--Poisson equation. The rate $\hbar^{3/4-\eps}$, for $\eps>0$ small enough, was obtained in~\cite{lafleche_strong_2021}, the reason being that the method used in that previous paper was relying on a $L^1$ weak-strong uniqueness principle, thus leading to the expected correct rate of order $\hbar$ in trace norm, but lower order rates for higher Schatten norms. In this paper we recover a rate of order $\hbar$ for the $L^2$ convergence, that is expected to be optimal.
	
	On the other hand, the method used in this paper is closer to the method used in~\cite{benedikter_meanfield_2016, chong_manybody_2021} to prove the semiclassical mean-field limit from the many-body Schr\"odinger equation towards the Hartree--Fock equation for mixed states, in the sense that it considers the square roots of the phase space densities in the $L^2$ setting. It might give additional insights on the problem of the derivation of the Vlasov--Poisson equation as a mean-field limit of a many-body problem. In fact, it sheds light on the difference of requirements between the $L^1$ and the $L^2$ method. In particular, the latter, still being a weak-strong estimate, seems however to require the uniform boundedness in the phase space of both the solutions of the Hartree and the Vlasov--Poisson equation, as it is clear from our main result.

	\subsection{Notations}

	\subsubsection{Functional spaces} Let us fix some notations before stating our main result. A point in the one-particle phase space $\Rd_x\times \Rd_{\xi}$ is denoted by $z = (x, \xi)$. We denote the phase space Lebesgue norm and its mixed norm variant by
	\begin{align*}
		\Nrm{f}{L^r} = \Big(\intdd \n{f(z)}^r\d z\Big)^{\!\frac{1}{r}}, \quad  \Nrm{f}{L^p_x L^q_{\xi}} = \bNrm{\!\Nrm{f(x, \cdot)}{L^q_{\xi}(\Rd)}\!}{L^p_x(\Rd)}
	\end{align*}
	for $1\le r<\infty$ and with the usual modification when $r=\infty$.  The corresponding weighted Sobolev spaces $W^{k,p}_n$ are defined by the norm 
	\begin{equation*}
		\Nrm{f}{W^{k,p}_{n}} = \Big(\sum_{\substack{ \n{\alpha} \le k}}\Nrm{\weight{\xi}^n\partial^\alpha f}{L^{p}}^2\Big)^{\!\frac{1}{2}}\quad \text{ with }\quad  
		\renewcommand\arraystretch{1.5}
		\begin{array}{l}
			\alpha \in \Z_{\ge 0}^6,\, \n{\alpha}= \sum^6_{\ii=1} \alpha_{\ii},
			\\
			\text{and } \bd^\alpha = \bd_{x_1}^{\alpha_1}\cdots \bd_{\xi_3}^{\alpha_6}
		\end{array}
	\end{equation*}
	where $\weight{z}:=\sqrt{1+|z|^2}$ is the standard bracket notation. We also use the standard notation $H^{k}_{n} = W^{k,2}_{n}$. 

	For any bounded linear operator $\op$ acting on $L^2(\Rd)$, we denote its operator norm by $\Nrm{\op}{\infty}$, and for $1\le p < \infty$, we define its Schatten-$p$ norm by 
	\begin{align}
		\Nrm{\op}{p}= \Tr{\n{\op}^p}^{\frac1p}
	\end{align}
	where $\n{\op} = \sqrt{\op^\ast \op}$. To obtain meaningful quantities in the semiclassical limit $\hbar\rightarrow 0$, we define the semiclassical analogue of the Schatten spaces via the rescaled Schatten norm
	\begin{align}
		\Nrm{\op}{\L^p} = h^{\frac3p} \Nrm{\op}{p}= h^{\frac3p}\Tr{\n{\op}^p}^{\frac1p}.
	\end{align}
	These semiclassical spaces play the roles of the Lebesgue spaces on the phase space. We will denote by $C$ a $\hbar$-independent constant that can change from line to line.

	\subsubsection{Weyl quantization and Wigner transform} The Fourier transform is defined with the convention that 
	\begin{align}
		\widehat g(\xi)=\intd e^{-2\pi i\, x\cdot \xi}\,g(x)\dd x.
	\end{align}
	Being consistent with our definition of the Fourier transform, we associate to every function $f \in L^1(\Rd_x\times\Rd_{\xi})$ the operator $\op_f$ with integral kernel 
	\begin{align}
		\op_f(x, y) = \intd e^{-2\pi i\, (y-x)\cdot \xi} f(\tfrac{x+y}{2}, h\xi)\dd \xi.
	\end{align}
	The operator $\op_f$ is called the Weyl quantization associated to $f(z)$. By the Fourier inversion theorem, we could also define the inverse mapping called the Wigner transform. More precisely, for any operator $\op$ with sufficiently regular kernel $\op(x, y)$, we define its Wigner transform by 
	\begin{align}
		f_{\op}(x, \xi) = \intd e^{-2\pi i\, \xi\cdot y/h} \op(x+\tfrac{y}{2}, x-\tfrac{y}{2})\dd y.
	\end{align}

	\subsection{Main result}
	Our main result reads as follows.
	\begin{theorem}\label{thm:main}
		Let $\op\geq 0$ be a solution to the Hartree equation~\eqref{eq:Hartree} with initial condition $\op^\init\in\L^1\cap\L^\infty$ and $f\geq 0$ be a solution to the Vlasov--Poisson equation~\eqref{eq:Vlasov} with initial condition $f^\init$ verifying
		\begin{equation*}
			f^\init,\,\sqrt{f^\init}\in W^{4,\infty}_4 \cap H^4_4 \quad \text{ and } \quad \iintd f^\init \n{\xi}^{n_1}\d x\dd\xi <\infty
		\end{equation*}
		for $n_1 > 6$. Then there exist $\Lambda, C_1, C_2 \in C^0(\R_+,\R_+)$, independent of $\hbar$ and  depending only on the initial conditions of equations~\eqref{eq:Hartree} and~\eqref{eq:Vlasov} such that
		\begin{equation}\label{eq:Wigner_main_estimate}
			\Nrm{f_{\op}-f}{L^2} \leq \cC_{\infty}^{\frac12} \(\bNrm{f_{\sqrt{\op^\init}} - \sqrt{f^\init}}{L^2} + C(t) \,\hbar\) e^{\Lambda(t)} + C_2(t)\,\hbar.
		\end{equation}
	\end{theorem}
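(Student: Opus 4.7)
The plan is to adapt to the singular Coulomb case the $L^2$ method used for the semiclassical mean-field limit of mixed states in~\cite{benedikter_meanfield_2016, chong_manybody_2021}. I would introduce the positive operator square root $\opA := \sqrt{\op}\ge 0$ and its Wigner transform $g := f_\opA$, to be compared with the classical quantity $\tilde g := \sqrt{f}$ in $L^2$. Since $\op \geq 0$ evolves by a (time-dependent) unitary flow generated by $H_\op$, its positive square root satisfies $i\hbar\,\dpt \opA = [H_\op, \opA]$; taking the Wigner transform and using that the quadratic kinetic term produces an exact classical transport, $g$ satisfies a Wigner--Vlasov equation that differs from Vlasov only through the Moyal remainder, i.e.,
\begin{equation*}
\dpt g + \xi \cdot \Dx g + E_\op \cdot \Dv g = R_\hbar,
\end{equation*}
with $R_\hbar$ the remainder in the expansion $\tfrac{1}{i\hbar}[V_\op, g]_\# = -E_\op \cdot \Dv g + R_\hbar$, formally of order $\hbar^2$ when $V_\op$ and $g$ are smooth. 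On the classical side, because $\sqrt{\cdot}$ is preserved along characteristics of the Vlasov--Poisson flow, $\tilde g$ solves $\dpt \tilde g + \xi \cdot \Dx \tilde g + E_f \cdot \Dv \tilde g = 0$.

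I would then set $w := g - \tilde g$, which satisfies
\begin{equation*}
\dpt w + \xi \cdot \Dx w + E_\op \cdot \Dv w = -(E_\op - E_f) \cdot \Dv \tilde g + R_\hbar.
\end{equation*}
Testing in $L^2$, the transport terms vanish by integration by parts (both $\xi \cdot \Dx$ and $E_\op \cdot \Dv$ are divergence-free in $(x,\xi)$), giving
\begin{equation*}
\tfrac12 \dt \Nrm{w}{L^2}^2 \le \Nrm{w}{L^2}\bigl(\Nrm{E_\op - E_f}{L^\infty} \Nrm{\Dv \tilde g}{L^2} + \Nrm{R_\hbar}{L^2}\bigr).
\end{equation*}
To control $E_\op - E_f = -\Dx K * (\rho_\op - \rho_f)$, I would use the decomposition
\begin{equation*}
\rho_\op - \rho_f = \int (f_\op - g^2)\dd\xi + \int w\,(g + \tilde g) \dd\xi ,
\end{equation*}
in which the first term is small via the Moyal expansion $g \#_\hbar g - g^2 = O(\hbar^2)$, and the second is bounded by Cauchy--Schwarz by $\Nrm{w}{L^2}$ times $\xi$-weighted $L^2$ norms of $g + \tilde g$. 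Combined with the three-dimensional Coulomb interpolation $\Nrm{\Dx K * \rho}{L^\infty} \lesssim \Nrm{\rho}{L^1}^{1/3}\Nrm{\rho}{L^\infty}^{2/3}$, this yields $\Nrm{E_\op - E_f}{L^\infty} \le C(t)\bigl(\hbar + \Nrm{w}{L^2}\bigr)$, while a similar analysis produces $\Nrm{R_\hbar}{L^2}\le C(t)\hbar$. Both bounds rely on uniform-in-$\hbar$ propagation of weighted Sobolev and $L^\infty$ norms on $g$ and $\tilde g$: classical for Vlasov--Poisson via the hypotheses on $\sqrt{f^\init}$, and for $g$ via propagation of the regularity of $\sqrt{\op^\init}$ along the Hartree flow.

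Applying Gronwall will then produce $\Nrm{w(t)}{L^2}\le (\Nrm{w^\init}{L^2}+C_1(t)\hbar)\,e^{\Lambda(t)}$. To conclude, I would decompose $f_\op - f = (f_\op - g^2) + w\,(g + \tilde g)$ and take the $L^2$ norm:
\begin{equation*}
\Nrm{f_\op - f}{L^2} \le \Nrm{f_\op - g^2}{L^2} + \Nrm{w}{L^2}\Nrm{g + \tilde g}{L^\infty} \le C_2(t)\hbar + \cC_\infty^{\frac12}\bigl(\Nrm{w^\init}{L^2} + C_1(t)\hbar\bigr) e^{\Lambda(t)},
\end{equation*}
recovering \eqref{eq:Wigner_main_estimate} with $\cC_\infty$ essentially bounding $\Nrm{g+\tilde g}{L^\infty}^2$. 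The main obstacle will be the Coulomb singularity: controlling $\Nrm{E_\op - E_f}{L^\infty}$ requires balancing a density difference that mixes a semiclassical Moyal error with the $L^2$ norm of $w$ against the short-scale singularity of $\Dx K$, and the remainder $R_\hbar$ itself contains further derivatives of $V_\op = K * \rho_\op$ that are singular and demand uniform-in-$\hbar$ control via the weighted regularity assumed in the theorem. A secondary subtle point is the uniform propagation of the $L^\infty$ bound on $g = f_{\sqrt{\op}}$ along the Hartree flow (encoded in $\cC_\infty$), for which there is no direct operator-norm counterpart and which genuinely requires phase-space Sobolev smoothness.
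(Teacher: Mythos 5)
Your proposal captures the general $L^2$/square-root philosophy but diverges from the paper in a way that hides two genuine gaps, both tied to the Coulomb singularity.

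\emph{First gap: the Moyal remainder depends on $V_\op$.} You write the transport equation for $g=f_{\sqrt{\op}}$ with a remainder $R_\hbar$ coming from the expansion of the Moyal bracket $\tfrac{1}{i\hbar}[V_\op,g]_\#$, and claim $\|R_\hbar\|_{L^2}\le C(t)\hbar$. But $R_\hbar$ contains third and higher derivatives of $V_\op=K*\rho_\op$, which for Coulomb $K$ requires controlling $\nabla^k\rho_\op$ in strong norms \emph{uniformly in $\hbar$}. This is precisely the open problem the paper flags in a remark after Theorem~\ref{thm:Q-uniqueness}: global uniform-in-$\hbar$ propagation of regularity for the Hartree flow with Coulomb is not known. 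The paper avoids ever expanding the Hartree-side commutator: it introduces the auxiliary operator $\widetilde{\op}$ solving the \emph{linear} Hartree equation~\eqref{eq:linear-Hartree} with the \emph{Vlasov} potential $V_f$, so the only Moyal-type remainder that appears is $B_f(\op_f)$~\eqref{def:B_operator}, built from $V_f$, whose regularity is under control because $f$ is a smooth Vlasov solution. The difference $\op-\widetilde{\op}$ is then handled by operator-level commutator bounds (\cite[Theorem~4.1]{lafleche_strong_2021} and Lemma~\ref{lem:commutator_estimate_HS}) that never require expanding $[V_\op,\cdot]$ in powers of $\hbar$.

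\emph{Second gap: the force-difference bound does not close.} You try to estimate $\|E_\op-E_f\|_{L^\infty}$ by interpolation $\|\nabla K*\rho\|_{L^\infty}\lesssim\|\rho\|_{L^1}^{1/3}\|\rho\|_{L^\infty}^{2/3}$ applied to $\rho_\op-\rho_f$. But your decomposition $\rho_\op-\rho_f=\int(f_\op-g^2)\dd\xi+\int w(g+\tilde g)\dd\xi$, after Cauchy--Schwarz in $\xi$, only yields an $L^2_x$ control of the second piece in terms of $\|w\|_{L^2}$; there is no route from $\|w\|_{L^2}$ to $\|\rho_\op-\rho_f\|_{L^\infty_x}$, so the interpolation hypothesis is not available and the Gr\"onwall loop does not close. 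The paper instead never takes an $L^\infty$ norm of the force difference: in the stability argument (classical Theorem in Section~\ref{sec:stability-est} and its quantum analogue Theorem~\ref{thm:Q-uniqueness}), the force-difference term is split into $I_1+2I_2$, with $I_1$ bounded via the $L^{3/2,\infty}$ norm of $\nabla K$ and H\"older for Lorentz spaces against $\|v\|_{L^\infty}$, and $I_2$ via Hardy--Littlewood--Sobolev followed by Cauchy--Schwarz, landing on $\|\rho_2\|_{L^\infty_x}^{1/2}$ and $L^2$-type quantities of $v$ rather than $\|E_\op-E_f\|_{L^\infty}$.

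There is also a structural difference you have not addressed: you compare $g=f_{\sqrt{\op}}$ directly to $\sqrt{f}$, but the Weyl quantization of $\sqrt f$ need not be a positive operator. The paper's use of the Wick quantization $\widetilde{\op}_{\sqrt{f^\init}}$ as initial datum for the auxiliary flow is precisely what guarantees positivity (so that $\tilde v=\sqrt{\widetilde{\op}}$ exists) while remaining $O(\hbar)$-close to $\op_{f^\init}$ in $\L^2$ (Lemma~\ref{lem:properties_of_Wick} and Lemma~\ref{lem:commut_square_wick}). Without an analogue of this step you cannot make sense of the comparison at the square-root level in a way that simultaneously respects positivity and stays $O(\hbar)$ close to the Weyl picture.
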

	
	\begin{remark}
		Our method also allows us to treat the Hartree--Fock equation.
		In that case, the exchange term vanishes in the semiclassical limit and can be treated as an error term as in~\cite[Proposition~5.1]{lafleche_strong_2021}.
	\end{remark}
	
	\subsubsection{Strategy and explicit constants} Before giving more precise upper bounds on the functions $\Lambda$, $C_1$ and $C_2$, let us explain our strategy. The rationale of our result is a stability estimate for the square root of the solutions of the Vlasov equation in $L^2$, as explained in Section~\ref{sec:stability-est}.  {Rephrasing this estimate in the quantum context to estimate the difference between the solutions to the Hartree equation and the Vlasov equation}, we have to deal with the fact that the Weyl quantization of a non-negative function is not always non-negative. Thus, we have to consider an alternative quantization of $f$, sometimes called the Wick quantization, the anti-Wick quantization, the T\"oplitz quantization, the Husimi quantization or the coherent state quantization. We introduce the coherent state at the point $z$ and its corresponding density operator 
	\begin{align}
		\psi_z(y):= \(\pi \hbar\)^{-3/4} e^{-\n{y-x}^2/(2\hbar)}\,e^{i y\cdot \xi/\hbar} \quad \text{ and } \quad \op_z := h^{-3}\ket{\psi_z}\bra{\psi_z}.
	\end{align}
	Then, for every function $f$ on the phase space, we associate the operator $\widetilde{\op}_f$ defined by taking averages of the coherent states $\op_z$ against $f(z)$, that is
	\begin{align}\label{eq:wick}
		\widetilde{\op}_f = \intdd f(z)\,\op_z\dd z = \frac{1}{h^3}\intdd f(z) \ket{\psi_z}\bra{\psi_z}\d z. 
	\end{align}
	It follows from this formula that $\widetilde{\op}_f$ is a positive operator whenever $f\ge 0$. We summarize some of its other properties which we will need in this work in the following lemma. We refer the reader to \cite{lions_mesures_1993, lerner_fefferman-phong_2007} for additional properties of this quantization {and the proof of the following lemma}.

	\begin{lem}\label{lem:properties_of_Wick}
		Let $f$ be a function of the phase space. Then  the quantization~\eqref{eq:wick} is the combination of the Weyl quantization and a convolution with a Gaussian
		\begin{equation}\label{eq:Wick_convolution}
			\widetilde{\op}_f =\op_{\tilde f} \quad \text{ where } \quad \widetilde f = g_h\ast f \text{ with } g_h(z):= (\pi\hbar)^{-3}\,e^{-\n{z}^2/\hbar}.
		\end{equation}
		By \eqref{eq:Wick_convolution}, we have the estimate
		\begin{equation}\label{eq:Wick_Weyl_error}
			\Nrm{\op_f-\widetilde{\op}_f}{\L^2} = \Nrm{f-g_h\ast f}{L^2} \le \tfrac{3\, \hbar}{2}\Nrm{\nabla^2 f}{L^2}.
		\end{equation}
		For $1\le p\le \infty$, we have that 
		\begin{equation}\label{eq:Wick_Schatten}
			\Nrm{\widetilde{\op}_f}{\L^p}\le \Nrm{f}{L^p}.
		\end{equation}
	\end{lem}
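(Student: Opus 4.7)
The plan is to reduce each of the three assertions to short direct computations built on two ingredients: the explicit Wigner transform of the coherent state operator $\op_z$, and the semiclassical Weyl isometry $\Nrm{\op_g}{\L^2}=\Nrm{g}{L^2}$, i.e.\ the rescaled Plancherel identity for the Weyl quantization, which I would take as known. Together these translate each operator-level statement into an elementary statement about convolution with a Gaussian in phase space.

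For \eqref{eq:Wick_convolution}, the core step is to compute the Wigner transform of $\psi_z$. Expanding $\psi_z(x'+y/2)\,\overline{\psi_z(x'-y/2)}$ and using the parallelogram identity $\n{a+c/2}^2+\n{a-c/2}^2=2\n{a}^2+\n{c}^2/2$, the $x'$-dependent Gaussian factor $e^{-\n{x'-x}^2/\hbar}$ factors out of the remaining $y$-integral, which is a Gaussian Fourier transform producing another Gaussian in $\xi'-\xi$. Collecting constants via $h^{-3}\cdot 2^3=(\pi\hbar)^{-3}$ gives $f_{\op_z}(z')=g_h(z'-z)$, and then exchanging the order of integration in $\widetilde{\op}_f=\intdd f(z)\,\op_z\dd z$ identifies the Weyl symbol of $\widetilde{\op}_f$ with $g_h\ast f$.

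For \eqref{eq:Wick_Weyl_error}, the Weyl isometry immediately gives $\Nrm{\op_f-\widetilde{\op}_f}{\L^2}=\Nrm{f-g_h\ast f}{L^2}$. I would then write $(g_h\ast f)(z)-f(z)=\intdd g_h(w)\,(f(z-w)-f(z))\dd w$ and Taylor-expand to second order: the linear term $w\cdot\nabla f(z)$ integrates to zero by evenness of $g_h$, and Minkowski's inequality in $L^2_z$ reduces the estimate to $\tfrac12\Nrm{\nabla^2 f}{L^2}\intdd g_h(w)\n{w}^2\dd w$. Since $g_h$ is a product of six centered one-dimensional Gaussians of variance $\hbar/2$, this last integral equals $3\hbar$, yielding the stated constant $\tfrac{3\hbar}{2}$.

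For \eqref{eq:Wick_Schatten}, I would treat the two endpoints and interpolate. At $p=\infty$, the resolution of identity $\intdd\op_z\dd z=\Id$ (itself following from Plancherel in $\xi$ together with a Gaussian integration in $x$) implies, after decomposing $f=f_+-f_-$, that $\pm\widetilde{\op}_f\le\Nrm{f}{L^\infty}\Id$ and hence gives the operator-norm bound. At $p=1$, the positivity of the $\op_z$ and the identity $\Tr{\op_z}=h^{-3}$ give $\Tr{\n{\widetilde{\op}_f}}\le h^{-3}\Nrm{f}{L^1}$, i.e.\ $\Nrm{\widetilde{\op}_f}{\L^1}\le\Nrm{f}{L^1}$. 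Complex interpolation \emph{à la} Riesz--Thorin for the semiclassical Schatten spaces then covers $1<p<\infty$. No single step is genuinely difficult; the main technical nuisance is keeping all Gaussian normalizations consistent with the $\hbar$ and $h=2\pi\hbar$ conventions and with the Fourier convention chosen for the Wigner transform.
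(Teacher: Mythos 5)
The paper itself does not prove this lemma; it defers to the references \cite{lions_mesures_1993, lerner_fefferman-phong_2007}, so there is no in-text proof to compare against. Your argument is correct and is the standard one: the Wigner-transform computation for $\op_z$ via the parallelogram identity and a Gaussian Fourier transform yields $f_{\op_z}(z')=g_h(z'-z)$, hence \eqref{eq:Wick_convolution}; the Weyl $\L^2$ isometry plus an even-kernel second-order Taylor expansion and $\int g_h(w)\n{w}^2\dd w = 3\hbar$ gives \eqref{eq:Wick_Weyl_error} with the stated constant; and the $\L^1$/$\L^\infty$ endpoint bounds (via $\Tr\op_z=h^{-3}$ and the resolution of identity $\widetilde{\op}_1=\Id$) followed by complex interpolation between semiclassical Schatten scales give \eqref{eq:Wick_Schatten}. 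One small stylistic remark: for the $\L^\infty$ bound, rather than decomposing $f=f_+-f_-$, it is slightly cleaner to bound the quadratic form directly, $\n{\Inprod{\phi}{\widetilde{\op}_f\,\phi}}\le h^{-3}\Nrm{f}{L^\infty}\intdd\n{\Inprod{\psi_z}{\phi}}^2\dd z = \Nrm{f}{L^\infty}\Nrm{\phi}{L^2}^2$, which handles complex-valued $f$ too; but for the real-valued symbols used in the paper your decomposition argument is perfectly adequate.
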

	
	This quantization will allow us to consider an intermediate equation between the Hartree and the Vlasov--Poisson equations, depending on the mean-field force of the Vlasov equation, but whose solution is a positive operator. More precisely, we consider $\widetilde{\op}$ a solution to the equation
	\begin{equation*}
		i\hbar\,\dpt\widetilde{\op} = \com{H_f,\widetilde{\op}}
	\end{equation*}
	with Hamiltonian $H_f=-\frac{\hbar^2}{2}\,\Delta+V_{f}$ and initial data $\widetilde{\op}^\init := \widetilde{\op}_{\sqrt{f^\init}}^2$, and we define 
	\begin{equation*}
		\tilde v := \sqrt{\widetilde{\op}},
	\end{equation*}
	which satisfies the same equation. Our estimates rely on the semiclassical regularity of this equation.
	
	\subsubsection{Quantum Sobolev spaces} To define the semiclassical version of Sobolev spaces on the phase space, we introduce the following operators
	\begin{equation}
		\Dhx \op := \com{\nabla,\op} \quad \text{ and } \quad
		\Dhv \op := \com{\frac{x}{i\hbar},\op},
	\end{equation}
	which can be seen as an application of the correspondence principle of quantum mechanics. More precisely, one observes that these operators correspond to the gradients of the Wigner transform, that is
	\begin{equation}
		f_{\Dhx \op} = \Dx f_{\op}\quad \text{ and } \quad f_{\Dhv \op} = \Dv f_{\op}.
	\end{equation}
	We will refer to $\Dhx\op$ and $\Dhv\op$ as the quantum gradients. The semiclassical analogues of the weighted kinetic homogeneous Sobolev norms are defined by
	\begin{subequations}
		\begin{align}
			\Nrm{\op}{\dot{\cW}^{k,p}(\opm)}^p &:= \sum_{\n{\alpha}\le k} \Nrm{\Dh_{\!z}^\alpha\op}{\L^p(\opm)}^p\quad \text{ with } \quad \Dh_{\!z}^\alpha = \Dh_{\!x_1}^{\alpha_1}\cdots\Dh_{\!\xi_3}^{\alpha_6}
			\\
			\Nrm{\op}{\dot{\cW}^{k,\infty}(\opm)} &:= \sup_{\n{\alpha} \le k}  \Nrm{\Dh_{\!z}^\alpha\op}{\L^\infty(\opm)},
		\end{align}
	\end{subequations}
	and we consider the particular case of the weights defined for $n\in \N$ by
	\begin{align}\label{eq:weight}
		\opm := \weight{\opp}^n = \big(1 + \n{\opp}^2\big)^{\!\frac{n}{2}},
	\end{align}
	where $\opp = -i\hbar\nabla$ so that $\n{\opp}^2 = -\hbar^2\Delta$. We also define the inhomogeneous version by
	\begin{align}
		\Nrm{\op}{\cW^{k,p}(\opm)}^p &:= \Nrm{\op}{\L^p(\opm)}^p + \Nrm{\op}{\dot{\cW}^{k,p}(\opm)}^p,
	\end{align}
	with the usual modification when $p=\infty$. In particular, for $p=2$, $\Nrm{\op}{\cW^{k,2}} = \Nrm{f_{\op}}{H^k}$.
	
	\subsubsection{A precised version of the main theorem} With the above definitions, we now give another version of the main theorem with more explicit upper bounds on the constants appearing in Inequality~\eqref{eq:Wigner_main_estimate}.
	
	\begin{theorem}\label{thm:main_2}
		Under the assumptions of Theorem~\ref{thm:main}, the following estimate holds
		\begin{equation}\label{eq:Weyl_main_estimate}
			\Nrm{\op-\op_f}{\L^2} \leq \cC_{\infty}^{\frac12} \(\Nrm{\widetilde{\op}_{\sqrt{f^\init}} - \sqrt{\op^\init}}{\L^2} + C(t) \,\hbar\) e^{\Lambda(t)} + C_2(t)\,\hbar.
		\end{equation}
		An upper bound for $\Lambda$ is given by $\Lambda(t) \leq C \int_0^t \lambda(s)\dd s$ where $\lambda$ is defined for some $\eps\in(0,1)$ by
		\begin{equation}\label{eq:lambda}
			\lambda(t) = \Nrm{\Dhv \tilde{v}}{\cW^{1,2}} \Nrm{\rho_f}{L^\infty_x}^{\frac12} + \cC_{\infty}^{\frac12} \Nrm{\Dhv \tilde{v}}{\L^{3\pm\eps}(\weight{\opp}^n)}.
		\end{equation}
		with $n>2$. The functions $C(t)$ and $C_2(t)$ are bounded above by 
		\begin{align}\label{eq:bound-c}
			C(t)^2 &\leq C \int_0^t \Nrm{\Dhv \tilde{v}}{\cW^{1,2}}^2 \(C^\init + \Nrm{\rho_f}{W^{1,\infty}_x} \Nrm{\op_f}{\cW^{2,2}(\weight{\opp}^2)}\)^2
			\\
			C_2(t) &\leq C \(C^\init + \int_0^t \Nrm{\rho_f}{W^{1,\infty}_x} \Nrm{\nabla_{\xi}^2 f}{L^2}\)\label{eq:bound-c2}
		\end{align}
		which remain bounded at any time $t\geq 0$, and where
		\begin{equation}\label{eq:C_init}
			C^\init = \bNrm{\sqrt{f^\init}}{W^{1,4}_1\cap H^3}^2 + \Nrm{f^\init}{H^1_1\cap H^2}.
		\end{equation}
		\end{theorem}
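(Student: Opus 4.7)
The approach is to work at the level of the positive operator square roots $\sqrt{\op}$ and $\tilde v := \sqrt{\widetilde{\op}}$. Since $H_\op$ and $H_f$ are self-adjoint, the Heisenberg evolutions of $\op$ and $\widetilde{\op}$ are unitary conjugations, so the square roots satisfy
\begin{equation*}
i\hbar\,\dpt\sqrt{\op} = \bigl[H_\op,\sqrt{\op}\bigr], \qquad i\hbar\,\dpt\tilde v = \bigl[H_f,\tilde v\bigr].
\end{equation*}
Setting $\delta := \sqrt{\op}-\tilde v$, a subtraction yields $i\hbar\,\dpt\delta = [H_\op,\delta] + [V_\op-V_f,\tilde v]$. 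Differentiating $\Nrm{\delta}{\L^2}^2$ in time, the contribution of $[H_\op,\delta]$ drops by cyclicity of the trace, leaving
\begin{equation*}
\tfrac{1}{2}\dt \Nrm{\delta}{\L^2}^2 \;=\; \tfrac{h^3}{\hbar}\,\im\,\tr\bigl(\delta\,[V_\op-V_f,\tilde v]\bigr).
\end{equation*}
A second key algebraic identity, $\op-\widetilde{\op} = \sqrt{\op}\,\delta + \delta\,\tilde v$, combined with Schatten-Hölder, gives $\Nrm{\op-\widetilde{\op}}{\L^2}\le (\Nrm{\sqrt{\op}}{\L^\infty}+\Nrm{\tilde v}{\L^\infty})\Nrm{\delta}{\L^2}\lesssim \cC_\infty^{1/2}\Nrm{\delta}{\L^2}$, supplying the prefactor $\cC_\infty^{1/2}$ in~\eqref{eq:Weyl_main_estimate}.

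By the triangle inequality $\Nrm{\op-\op_f}{\L^2}\le \Nrm{\op-\widetilde{\op}}{\L^2}+\Nrm{\widetilde{\op}-\op_f}{\L^2}$. The second piece is handled independently as a purely semiclassical comparison between the auxiliary quantum flow generated by $H_f$ and its classical Vlasov counterpart: the Moyal-Poisson defect in the symbol is of order $\hbar^2$, and integrating it in time with the classical control $\Nrm{\rho_f}{W^{1,\infty}_x}\Nrm{\nabla_\xi^2 f}{L^2}$ and Lemma~\ref{lem:properties_of_Wick} yields~\eqref{eq:bound-c2}; the initial gap between $\widetilde{\op}^\init=\widetilde{\op}_{\sqrt{f^\init}}^2$ and $\op_{f^\init}$ is similarly $O(\hbar)$ and contributes to $C^\init$.

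The heart of the proof is the Grönwall argument for $\Nrm{\delta}{\L^2}$. Split $V_\op-V_f = K*(\rho_\op-\rho_{\widetilde{\op}}) + K*(\rho_{\widetilde{\op}}-\rho_f)$; the second summand is an $\hbar$-small source producing the $C(t)\hbar$ contribution in~\eqref{eq:bound-c} via the same Egorov-type estimate as above. For the self-consistent summand, combining the density identity $\rho_\op-\rho_{\widetilde{\op}}=\diag(\sqrt{\op}\,\delta+\delta\,\tilde v)$ with the first-order semiclassical commutator expansion $\frac{1}{i\hbar}[V,\tilde v]=\nabla V\cdot\Dhv\tilde v + R_\hbar$, whose remainder $R_\hbar$ is of order $\hbar$ in $\cW^{1,2}$, reduces the task to controlling
\begin{equation*}
\Bigl|\,\tr\bigl(\delta\,\nabla\!\bigl(K*\diag(\sqrt{\op}\,\delta+\delta\,\tilde v)\bigr)\cdot\Dhv\tilde v\bigr)\,\Bigr|.
\end{equation*}
Schatten-Hölder splits this into a product of a spatial density norm and an operator norm of $\Dhv\tilde v$. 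The Hardy-Littlewood-Sobolev inequality $\Nrm{\nabla(K*g)}{L^2_x}\lesssim\Nrm{g}{L^{6/5}_x}$ (plus an $L^{3\pm\eps}$ variant at the endpoint) estimates the spatial density of $\sqrt{\op}\,\delta+\delta\,\tilde v$ by $\cC_\infty^{1/2}\Nrm{\delta}{\L^2}$, and the remaining operator factor is bounded either by $\Nrm{\Dhv\tilde v}{\cW^{1,2}}\Nrm{\rho_f}{L^\infty_x}^{1/2}$ or by $\cC_\infty^{1/2}\Nrm{\Dhv\tilde v}{\L^{3\pm\eps}(\weight{\opp}^n)}$; the sum reconstructs exactly $\lambda(t)$ as defined in~\eqref{eq:lambda}.

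The principal obstacle is this singular commutator estimate: one must distribute the $|x|^{-2}$ singularity of $\nabla K$ between a density of an operator product controlled only in $\L^2$ through $\delta$ and the semiclassical derivative $\Dhv\tilde v$, while tracking carefully the remainder of the Moyal expansion so as not to lose a power of $\hbar$. The dual regularity hypothesis on $f^\init$ and $\sqrt{f^\init}$ in $H^k_n\cap W^{k,\infty}_n$ is tailored so that it propagates into uniform $\cW^{k,p}(\weight{\opp}^n)$-bounds on $\tilde v$ along the auxiliary quantum flow, keeping $\lambda$, $C$ and $C_2$ finite in time. Once these ingredients are in place, Grönwall's lemma with rate $\lambda(t)$ and forcing $C(t)\hbar$ gives $\Nrm{\delta}{\L^2}\le(\Nrm{\delta(0)}{\L^2}+C(t)\hbar)\,e^{\Lambda(t)}$ with $\delta(0)=\sqrt{\op^\init}-\widetilde{\op}_{\sqrt{f^\init}}$, and the reductions in the first paragraph yield~\eqref{eq:Weyl_main_estimate}.
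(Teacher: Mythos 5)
Your overall strategy matches the paper's exactly: triangle inequality through the auxiliary flow $\widetilde{\op}$ generated by $H_f$ with the positive initial datum $\widetilde{\op}_{\sqrt{f^\init}}^2$; Grönwall on the self-adjoint difference $\delta=\sqrt{\op}-\tilde v$; the algebraic identity $\op-\widetilde{\op}=\sqrt{\op}\,\delta+\delta\,\tilde v$ to recover the $\cC_\infty^{1/2}$ prefactor; and the split $V_\op-V_f=V_{\op-\widetilde{\op}}+V_{\widetilde{\op}-\op_f}$ separating the self-consistent Grönwall rate from the $O(\hbar)$ forcing. This is precisely Proposition~\ref{prop:sqrt-op-vs-opt} together with the estimates of the positivity defect.

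Two technical steps as written would not survive, though. First, you invoke a Moyal expansion $\frac{1}{i\hbar}[V,\tilde v]=\nabla V\cdot\Dhv\tilde v+R_\hbar$ with $R_\hbar=O(\hbar)$ in $\cW^{1,2}$. For Coulomb the next Moyal term involves $\nabla^2 V_f$, which is a singular integral of $\rho_f$ and is not in $L^\infty$, so such a remainder bound is not available. The paper never performs this expansion: Lemma~\ref{lem:commutator_estimate_HS} estimates the \emph{exact} commutator by writing $\frac{1}{i\hbar}[V,\opmu](x,y)=\int_0^1\nabla V(z_\theta)\cdot\Dhv\opmu(x,y)\,\d\theta$ and then using a change of variables, Hölder and Hardy--Littlewood--Sobolev; the quadratic piece $V_{v^2}$ is treated instead with the trace-norm commutator estimate of~\cite{lafleche_strong_2021}, which is where the weighted norm $\Nrm{\Dhv\tilde v}{\L^{3\pm\eps}(\weight{\opp}^n)}$ in $\lambda$ comes from (a detail your sketch does not capture, and which is distinct from the $\Nrm{\rho_f}{L^\infty_x}^{1/2}$ factor in the other term). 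Second, to convert $\widetilde{\op}-\op_f$ into the $L^2_x$ density feeding Lemma~\ref{lem:commutator_estimate_HS}, the unweighted $\L^2$ comparison you cite is not enough; one needs the \emph{weighted} bound $\Nrm{\weight{\opp}(\widetilde{\op}-\op_f)\weight{\opp}}{\L^2}=O(\hbar)$ of Lemma~\ref{lemma:diag-opt-opf}, which is what brings $\Nrm{\op_f}{\cW^{2,2}(\weight{\opp}^2)}$ and the $W^{1,4}_1\cap H^3$ norm of $\sqrt{f^\init}$ into $C(t)$.
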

		
		The fact that these quantities~\eqref{eq:lambda}, \eqref{eq:bound-c} and \eqref{eq:bound-c2} remain bounded uniformly in $\hbar$ at any time is proved in the last section in Proposition~\ref{prop:propagation_regularity}.

	\begin{remark}
		Notice that the quantity
		\begin{equation*}
			\Nrm{\widetilde{\op}_{\sqrt{f^\init}} - \sqrt{\op^\init}}{\L^2}= \Nrm{\op_{g_h*\sqrt{f^\init}} - \sqrt{\op^\init}}{\L^2} = \Nrm{g_h*\sqrt{f^\init} - f_{\sqrt{\op^\init}}}{L^2}.
		\end{equation*}
		that appears in the right-hand side of Equation~\eqref{eq:Weyl_main_estimate} is $0$ in the case when $\op^\init = \widetilde{\op}_{\sqrt{f^\init}}^2$, that is when $\op^\init$ is the square of a Wick quantization.\\
		Due to the strategy of the proof, that mimics the proof of the stability estimates presented in Section~\ref{sec:stability-est}, the initial datum for the auxiliary problem \eqref{eq:linear-Hartree} has to be chosen positive, close  to $\op_f^\init$ in $\L^2$, its square root is close to $\sqrt{\op^\init}$ in $\cL^2$ and regular in the sense that the quantity
		 $\bNrm{\Dhv{\sqrt{\smash[t]{\widetilde{\op}}^{\init}}}\,\opm}{\L^p} + \bNrm{\Dhx{\sqrt{\smash[t]{\widetilde{\op}}^{\init}}}\,\opm}{\L^p}$ is uniformly bounded with respect to~$\hbar$. The choice $\widetilde{\op}^\init = \widetilde{\op}_{\sqrt{f^\init}}^2$ guarantees these properties.
	\end{remark}

\section{Stability estimates for the Vlasov--Poisson and Hartree equations}\label{sec:stability-est}

\subsection{Classical case}
	
	We start by explaining our method through examining the classical case and obtaining a stability estimate for the Vlasov--Poisson equation.

	\begin{theorem}
		Let $f_1$ and $f_2$ be two solutions of the Vlasov--Poisson equation verifying $\Nrm{f_1}{L^\infty} = \Nrm{f_2}{L^\infty} \leq \cC_{\infty}$. Then we have the following bound
		\begin{equation*}
			\BNrm{\sqrt{f_1} - \sqrt{f_2}}{L^2} \leq \BNrm{\sqrt{f_1^\init} - \sqrt{f_2^\init}}{L^2} e^{\Lambda(t)}
		\end{equation*}
		where $\Lambda(t) = C\,\int_0^t \lambda(s)\dd s$ with
		\begin{equation*}
			\lambda(t) = \Nrm{\rho_2}{L^{\infty}_x}^{\frac12} \Nrm{\Dv \sqrt{f_2}}{L^3_xL^2_\xi} + \cC_{\infty}^{\frac12} \Nrm{\Dv \sqrt{f_2}}{L^{3,1}_xL^1_\xi}.
		\end{equation*}
		In particular, we have the following estimate
		\begin{equation*}
			\Nrm{f_1-f_2}{L^2} \leq 2\, \cC_{\infty}^{\frac12} \Nrm{f_1-f_2}{L^1}^{\frac12} e^{\Lambda(t)}.
		\end{equation*}
	\end{theorem}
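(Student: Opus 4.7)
The plan is to introduce $u_i := \sqrt{f_i}$ and prove the $L^2$ stability for $w := u_1 - u_2$; the bound for $f_1-f_2$ then follows from the pointwise identity $(f_1-f_2)^2 = w^2(u_1+u_2)^2 \le 4\,\cC_\infty\, w^2$. First, since Vlasov--Poisson is a transport equation along the divergence-free phase-space field $(\xi, E_{f_i})$ and the nonnegative square root is preserved by such transport, $u_i$ satisfies $\dpt u_i + \xi\cdot\Dx u_i + E_{f_i}\cdot\Dv u_i = 0$; this is direct on $\{f_i > 0\}$ and extends globally by approximating $\sqrt{f_i}$ with $\sqrt{f_i + \eps}$. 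Subtracting the two resulting equations gives $\dpt w + \xi\cdot\Dx w + E_{f_1}\cdot\Dv w = -(E_{f_1}-E_{f_2})\cdot\Dv u_2$, and testing with $w$ in $L^2$ kills the transport part by skew-symmetry, so $\frac{1}{2}\frac{d}{dt}\|w\|_{L^2}^2 = -A$ with $A = \iintd (E_{f_1}-E_{f_2})\cdot\Dv u_2\, w\,\d x\,\d\xi$.

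The crucial identity is $\rho_1 - \rho_2 = \intd (u_1+u_2)w\,\d\xi = 2\intd u_2 w\,\d\xi + \intd w^2\,\d\xi$, which induces the decomposition $E_{f_1}-E_{f_2} = E_a + E_b$ with $E_a = -\nabla K * (2\!\int u_2 w\,\d\xi)$ and $E_b = -\nabla K * (\int w^2\,\d\xi)$, and a corresponding splitting $A = A_a + A_b$. For $A_a$: Cauchy--Schwarz in $\xi$ yields the pointwise bound $|\int u_2 w\,\d\xi|(x) \le \sqrt{\rho_2(x)}\,\|w(x,\cdot)\|_{L^2_\xi}$, hence $\|\int u_2 w\,\d\xi\|_{L^2_x} \le \|\rho_2\|_{L^\infty_x}^{1/2}\|w\|_{L^2}$; the Hardy--Littlewood--Sobolev inequality gives $\|E_a\|_{L^6_x} \le C\|\rho_2\|_{L^\infty_x}^{1/2}\|w\|_{L^2}$, and then Cauchy--Schwarz in $\xi$ followed by H\"older in $x$ with exponents $(6,3,2)$ produces $|A_a| \le C\|\rho_2\|_{L^\infty_x}^{1/2}\|\Dv u_2\|_{L^3_x L^2_\xi}\|w\|_{L^2}^2$. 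For $A_b$: $\|\int w^2\,\d\xi\|_{L^1_x} = \|w\|_{L^2}^2$, so the weak-type Young inequality (since $\nabla K \in L^{3/2,\infty}$) gives $\|E_b\|_{L^{3/2,\infty}_x} \le C\|w\|_{L^2}^2$; using $|w| \le 2\cC_\infty^{1/2}$ inside the $\xi$-integral and the Lorentz duality $L^{3/2,\infty}$--$L^{3,1}$ in the $x$-integration produces $|A_b| \le C\,\cC_\infty^{1/2}\|\Dv u_2\|_{L^{3,1}_x L^1_\xi}\|w\|_{L^2}^2$.

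Combining the two estimates yields $|A| \le C\lambda(t)\|w\|_{L^2}^2$, and Gr\"onwall closes the $L^2$ estimate for $w$. The ``in particular'' statement then follows from the pointwise inequality $(f_1-f_2)^2 \le 4\cC_\infty w^2$ already used, together with $\|w^\init\|_{L^2}^2 \le \|f_1^\init - f_2^\init\|_{L^1}$, an elementary consequence of $(u_1-u_2)^2 \le (u_1+u_2)|u_1-u_2| = |u_1^2-u_2^2|$. The main technical difficulties I expect are (i) the rigorous passage from $f_i$ to $u_i = \sqrt{f_i}$ as a solution of the transport equation, handled by the mollification described above (or equivalently by flowing along characteristics), and (ii) the precise use of the Lorentz duality $L^{3/2,\infty}$--$L^{3,1}$, which is forced by the fact that $\nabla K$ belongs only to the weak Lebesgue space $L^{3/2,\infty}$; without this sharpness one cannot squeeze out the mixed norm $L^{3,1}_x L^1_\xi$ appearing in the second term of $\lambda$.
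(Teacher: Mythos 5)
Your proof is correct and follows essentially the same path as the paper's: the same decomposition $\rho_1-\rho_2 = 2\!\int u_2 w\,\d\xi + \int w^2\,\d\xi$, HLS with Cauchy--Schwarz and H\"older for the cross term, and the $L^{3/2,\infty}$--$L^{3,1}$ Lorentz machinery for the quadratic term (you apply weak Young to the force and then Lorentz duality, whereas the paper applies the Lorentz--H\"older/Young inequality directly to bound the force term in $L^\infty_x$; these are two presentations of the same estimate, using antisymmetry of the convolution kernel).
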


	\begin{proof}
		Let $v_1=\sqrt{f_1}$, $v_2=\sqrt{f_2}$ and $v := v_1-v_2$. Then $v$ satisfies the equation
		\begin{equation*}
			\big(\dpt+\xi\cdot\Dx+E_{v_1^2}\cdot\Dv\big)v = \big(E_{v_2^2}-E_{v_1^2}\big)\cdot\Dv v_2 = \(\nabla K * \rho_{\(v_2+v_1\)v}\)\cdot\Dv v_2.
		\end{equation*}
		Then, by direct computation, we have that
		\begin{align*}
			\frac{1}{2}\frac{\d}{\d t}\Nrm{v}{L^2}^2 &= -\intdd v \(\nabla K * \rho_{\(v_2+v_1\)v}\)\cdot\Dv v_2\dd x\dd\xi
			\\
			&= -\int_{\R^{12}} \big(\n{v(z')}^2 + 2\, v_2(z')\,v(z')\big) \nabla K(x-x') \cdot \(v(z)\, \Dv v_2(z)\) \dd z\dd z'\\
            &=: I_1 + 2 \, I_2.
		\end{align*}
		The first term is bounded by writing first
		\begin{align*}
			I_1 &\leq \BNrm{\nabla K * \intd v\, \Dv v_2 \dd\xi}{L^\infty_x} \Nrm{v}{L^2}^2 \leq \BNrm{\n{\nabla K} * \intd \n{\Dv v_2} \d\xi}{L^\infty_x} \Nrm{v}{L^\infty} \Nrm{v}{L^2}^2.
		\end{align*}
		and then by applying H\"older's inequality for the Lorentz spaces to get 
		\begin{equation*}
			I_1 \leq \Nrm{\nabla K}{L^{3/2,\infty}_x} \Nrm{\Dv v_2}{L^{3,1}_x L^1_\xi} \Nrm{v}{L^\infty} \Nrm{v}{L^2}^2.
		\end{equation*}
		The second term is bounded by the Hardy--Littlewood--Sobolev inequality and then by the Cauchy--Schwarz inequality, leading to
		\begin{align*}
			I_2 &\leq C\, \BNrm{\intd v\,v_2\dd \xi\,}{L^2_x}\,\BNrm{\intd v\,\Dv v_2 \dd \xi\,}{L^{6/5}_x}
			\\
			&\leq C \Nrm{\Nrm{v}{L^2_\xi}\Nrm{v_2}{L^2_\xi}}{L^{2}_x} \Nrm{\Nrm{v}{L^2_\xi}\Nrm{\Dv v_2}{L^2_\xi}}{L^{6/5}_x}.
		\end{align*}
		Finally, by H\"older's inequality, we obtain
		\begin{align*}
			I_2 &\leq C \Nrm{v_2}{L^\infty_xL^2_\xi}\Nrm{\Dv v_2}{L^3_xL^2_\xi} \Nrm{v}{L^2}^2 = C \Nrm{\rho_2}{L^\infty_x}^{\frac12} \Nrm{\Dv v_2}{L^3_xL^2_\xi} \Nrm{v}{L^2}^2.
		\end{align*}
		Combining the bounds for $I_1$ and $I_2$ leads to
		\begin{equation*}
			\frac{\d}{\d t} \Nrm{v}{L^2}^2 \leq C\, \big(\Nrm{\rho_2}{L^\infty_x} \Nrm{\Dv v_2}{L^3_xL^2_\xi} + \Nrm{v}{L^\infty} \Nrm{\Dv v_2}{L^{3,1}_x L^1_\xi}\big) \Nrm{v}{L^2}^2.
		\end{equation*}
		We conclude by using the fact that $\Nrm{v}{L^\infty} \leq \Nrm{f_1}{L^\infty}^{\frac12} + \Nrm{f_2}{L^\infty}^{\frac12} = 2\, \cC_{\infty}^{\frac12}$ and Gr\"onwall's lemma.
	\end{proof}
	
\subsection{Quantum case}\label{sect:Qcase}

	Using the quantum analogue of gradients and Lebesgue norms of the phase space, we deduce a similar estimate for the Hartree equation.

	\begin{theorem}\label{thm:Q-uniqueness}
		Let $\op_1$ and $\op_2$ be two solutions of the Hartree equation~\eqref{eq:Hartree} such that $\Nrm{\op_1}{\L^\infty} \leq \cC_{\infty}$ and $\Nrm{\op_2}{\L^\infty} \leq \cC_{\infty}$ and let $v_1 = \sqrt{\op_1}$ and $v_2 = \sqrt{\op_2}$. Then there exists a universal constant $C>0$ and $T>0$ such that for any $t\in [0,T]$,
		\begin{equation}\label{eq:L^2-L^2}
			\nrm{v_1-v_2}_{\L^2} \leq \Nrm{v_1^\init-v_2^\init}{\L^2} e^{\lambda(t)}
		\end{equation}
		where $\Lambda(t) = C \int_0^t \lambda(s)\dd s$ with $\lambda$ given for some $\eps\in(0,1)$ by
		\begin{equation}\label{eq:lambda_2}
			\lambda(t) = \Nrm{\Dhv v_2}{\cW^{1,2}} \Nrm{\rho_f}{L^\infty_x}^{\frac12} + \cC_{\infty}^{\frac12} \Nrm{\Dhv v_2\,\opm_n}{\L^{3\pm\eps}},
		\end{equation}
		where $\opm_n = \weight{\opp}^n$ with $n>2$. This implies the following estimate
		\begin{equation}\label{eq:L^2-L^1}
			\Nrm{\op_1-\op_2}{\L^2} \leq 2\, \cC_{\infty}^{\frac12} \Nrm{\op_1^\init-\op_2^\init}{\L^1}^{\frac12} e^{\Lambda(t)}.
		\end{equation}
	\end{theorem}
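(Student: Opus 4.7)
The plan is to mirror the classical proof above, replacing each manipulation by its operator analogue. Since $\op_i$ solves the Hartree equation with a self-adjoint mean-field Hamiltonian $H_i := H_{\op_i}$, the associated unitary family $U_i(t)$ satisfies $\op_i(t) = U_i(t)\op_i^\init U_i(t)^\ast$, hence $v_i(t) := U_i(t)\sqrt{\op_i^\init}\,U_i(t)^\ast$ is positive, satisfies $v_i(t)^2 = \op_i(t)$, and solves the same Heisenberg equation $i\hbar\,\dpt v_i = [H_i,v_i]$ as $\op_i$ itself. Subtracting the two equations and using that the kinetic parts of $H_1$ and $H_2$ coincide, we obtain
\begin{equation*}
	i\hbar\,\dpt v = [H_1,v] + [V_{\op_1}-V_{\op_2},v_2].
\end{equation*}
Differentiating $\Nrm{v}{\L^2}^2 = h^3\Tr(v^2)$ in time and using cyclicity of the trace to annihilate the $[H_1,v]$ contribution, the core identity becomes
\begin{equation*}
	\dt \Nrm{v}{\L^2}^2 = \frac{2}{i\hbar}\,h^3\,\Tr\bigl([v_2,v]\,(V_{\op_1}-V_{\op_2})\bigr).
\end{equation*}

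Next I would decompose $\op_1-\op_2$ by writing $v_1 = v+v_2$, which gives $\op_1-\op_2 = v^2 + (v v_2 + v_2 v)$ and therefore $V_{\op_1}-V_{\op_2} = K\ast\rho_{v^2} + K\ast\rho_{v v_2 + v_2 v}$, producing a splitting $I_1 + 2 I_2$ that is in direct analogy with the classical proof. For each piece I would use cyclicity to transfer the commutator onto $v_2$ and then apply the kernel identity
\begin{equation*}
	\tfrac{1}{i\hbar}[M_\phi,v_2](x,y) = \int_0^1 \nabla\phi\bigl((1-s)y+sx\bigr)\cdot(\Dhv v_2)(x,y)\,\d s,
\end{equation*}
which follows from the fundamental theorem of calculus combined with the relation $(x-y)\,v_2(x,y) = i\hbar\,(\Dhv v_2)(x,y)$. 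For $I_1$, this identity combined with the quantum H\"older inequality, the uniform bound $\Nrm{v}{\L^\infty}\leq 2\cC_{\infty}^{1/2}$, and a quantum Hardy--Littlewood--Sobolev estimate (with the momentum weight $\opm_n$ entering through a semiclassical Sobolev embedding on $\Dhv v_2$) delivers the contribution $\cC_\infty^{1/2}\Nrm{\Dhv v_2\,\opm_n}{\L^{3\pm\eps}}\Nrm{v}{\L^2}^2$. For $I_2$, the operator kernel Cauchy--Schwarz yields $|\rho_{v v_2}(x)| \leq \sqrt{\rho_{v^2}(x)\,\rho_{\op_2}(x)}$, and an HLS + H\"older argument in the $\cW^{1,2}$-type spaces produces the contribution $\Nrm{\rho_{\op_2}}{L^\infty_x}^{1/2}\Nrm{\Dhv v_2}{\cW^{1,2}}\Nrm{v}{\L^2}^2$. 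Summing these two bounds yields $\dt\Nrm{v}{\L^2}^2 \leq C\lambda(t)\Nrm{v}{\L^2}^2$, and Gr\"onwall's lemma gives~\eqref{eq:L^2-L^2}.

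To pass to~\eqref{eq:L^2-L^1}, I would combine~\eqref{eq:L^2-L^2} with two auxiliary Schatten estimates. At the initial time, the Powers--Stormer inequality $\Nrm{\sqrt{A}-\sqrt{B}}{2}^2 \leq \Nrm{A-B}{1}$ applied to $A = \op_1^\init$, $B = \op_2^\init$ gives $\Nrm{v^\init}{\L^2} \leq \Nrm{\op_1^\init-\op_2^\init}{\L^1}^{1/2}$; at any later time, the Schatten H\"older inequality applied to $\op_1-\op_2 = v_1 v + v v_2$ gives $\Nrm{\op_1-\op_2}{\L^2} \leq (\Nrm{v_1}{\L^\infty}+\Nrm{v_2}{\L^\infty})\Nrm{v}{\L^2} \leq 2\cC_{\infty}^{1/2}\Nrm{v}{\L^2}$. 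Chaining these three bounds produces~\eqref{eq:L^2-L^1}.

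The main obstacle is the $I_1$ estimate: the kernel identity places $\nabla(K\ast\rho_{v^2})$ at the parameter-dependent intermediate point $(1-s)y+sx$, so $\nabla\phi$ does not simply factor out of the trace. The remedy is to bound uniformly in $s$ via an $s$-invariant operator estimate before integrating, while absorbing the singularity of $\nabla K$ into $\rho_{v^2}$ via HLS and carrying the momentum weight $\opm_n$ through a semiclassical Sobolev embedding. Extracting precisely one factor of $\cC_\infty^{1/2}$ (the quantum counterpart of the classical $\Nrm{v}{L^\infty}$) and placing it exactly on the $\L^{3\pm\eps}$-type norm is the delicate bookkeeping that produces the exponents displayed in~\eqref{eq:lambda_2}.
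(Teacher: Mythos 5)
Your proposal follows the same skeleton as the paper's proof: the evolution equation for $v=v_1-v_2$, the algebraic splitting of $v_1^2-v_2^2$ into $v^2 + vv_2 + v_2v$, the resulting $I_1 + 2I_2$ decomposition, the Gr\"onwall closure, and the passage from \eqref{eq:L^2-L^2} to \eqref{eq:L^2-L^1} via Powers--St{\o}rmer at $t=0$ together with a product decomposition of $\op_1-\op_2$ (your $\op_1-\op_2=v_1v+vv_2$ is an equally valid exact identity, as is the paper's symmetrized version). Your handling of $I_2$ via the pointwise kernel Cauchy--Schwarz $\n{\rho_{vv_2}(x)}\le\sqrt{\rho_{v^2}(x)\rho_{\op_2}(x)}$, $L^2$-integration, and a Hilbert--Schmidt commutator estimate is essentially the same as the paper's use of Lemma~\ref{lem:commutator_estimate_HS}; the small detour changes nothing.

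The place where you diverge in substance is $I_1$, and this is where your sketch is too thin to be convincing. The paper does \emph{not} attack $I_1$ with the Taylor/kernel identity for $\tfrac{1}{i\hbar}\com{V_{v^2},v_2}$. Instead it disintegrates the potential, $V_{v^2}=\int K(\cdot-x)\,\rho_{v^2}(x)\,\d x$, writes $I_1 = h^2\int\Tr(\com{K_x,v_2}v)\,\rho_{v^2}(x)\,\d x$, applies Schatten--H\"older with $\Nrm{v}{\L^\infty}$ to pull out the $\cC_\infty^{1/2}$ factor, and then invokes the uniform-in-$x$ \emph{trace-norm} commutator bound $\sup_x h^{-1}\Nrm{\com{K_x,v_2}}{\L^1}\le C\Nrm{\Dhv v_2\,\opm_n}{\L^{3\pm\eps}}$ from \cite[Theorem~4.1]{lafleche_strong_2021}. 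That last bound is a genuinely delicate result and is where the momentum weight $\opm_n$ enters. Your route — apply the kernel Taylor expansion to $\com{V_{v^2},v_2}$ directly and then hope to extract $\Nrm{v}{\L^\infty}$ plus a weighted $\L^{3\pm\eps}$ norm via ``quantum H\"older, quantum HLS, semiclassical Sobolev embedding'' — does not obviously close: the kernel identity naturally controls Hilbert--Schmidt ($\L^2$) norms of commutators (as in Lemma~\ref{lem:commutator_estimate_HS}), not the trace norm, and you cannot simply read $\Nrm{v}{\L^\infty}$ off a pointwise kernel bound. You do flag $I_1$ as ``the main obstacle'' and propose to absorb the singularity of $\nabla K$ into $\rho_{v^2}$ by HLS, but this misses that the needed ingredient is a uniform trace-norm estimate for $\com{K_x,v_2}$, not an HLS estimate on $V_{v^2}$. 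So: right architecture, right $I_2$ and $\L^1$ step, but the mechanism you sketch for $I_1$ is not the paper's and, as described, leaves a real gap.
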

	
	\begin{remark}
		It is actually not difficult to see from the interpolation argument in the proof of Lemma~\ref{lem:commutator_estimate_HS} that $\Nrm{\Dhv v_2}{\cW^{1,2}} = \Nrm{\Dv f_{v_2}}{H^1}$ can actually be replaced by $\Nrm{\Dv f_{v_2}}{H^{1/2}}$.
	\end{remark}
	
	\begin{remark}
		Contrarily to our main theorem, the result here is only local-in-time since it is not yet known whether the global-in-time uniform in $\hbar$ propagation of regularity holds for the Hartree equation in the case of the Coulomb potential. For slightly less singular potentials, we have proved it in our recent paper~\cite{chong_globalintime_2022}.
	\end{remark}

	\begin{proof}
		Let $v = v_1-v_2$. Then $v$ satisfies the equation
		\begin{equation*}
			i\hbar\,\dpt v = \com{H_{\op_1}, v} + \com{V_{\op_1}-V_{\op_2}, v_2}.
		\end{equation*}
		Differentiating its Hilbert--Schmidt norm with respect to time, we obtain
		\begin{align*}
			\frac{1}{4\pi}\, \frac{\d}{\d t} \Nrm{v}{\L^2}^2 &= h^2 \Tr{\com{V_{\op_1}-V_{\op_2}, v_2} v}
			= h^2 \Tr{\com{V_{v_1^2-v_2^2}, v_2} v}.
		\end{align*}
		Since $v_1^2 - v_2^2 = \(v+v_2\)^2 - v_2^2 = v^2 + v\,v_2 + v_2\,v$ and $\Diag{v\,v_2} = \Diag{v_2\, v}$, we get
		\begin{align*}
		 	\frac{1}{4\pi}\,\frac{\d}{\d t}  \Nrm{v}{\L^2}^2 = h^2  \Tr{\com{V_{v^2}, v_2} v} + 2\,h^2 \Tr{\com{V_{v_2\,v}, v_2} v} =: I_1 + 2\, I_2.
		\end{align*}
		To bound the first term, we introduce the notation $K_x := K(x-y)$ and write
		\begin{align*}
			I_1 = h^2  \intd \Tr{\com{K_x,v_2} v} \rho_{v^2}(x) \dd x &\leq \sup_{x\in\Rd} \(h^{-1}\Nrm{\com{K_x,v_2}}{\L^1} \Nrm{v}{\L^\infty}\) \intd \n{\rho_{v^2}} \d x
			\\
			&\leq C \Nrm{\Dhv{v_2}\,\opm_n}{\L^{3\pm\eps}} \Nrm{v}{\L^\infty} \Nrm{v}{\L^2}^2
		\end{align*}
		where we have used \cite[Theorem 4.1]{lafleche_strong_2021}  to bound the commutator in trace norm. To bound the second term, we use H\"older's inequality to get
		\begin{equation*}
			I_2 \leq \Nrm{\com{V_{v\,v_2}, v_2}}{\L^2} \Nrm{v}{2}
		\end{equation*}
		and then we apply the following inequality which we will prove in the subsequent section (see Lemma~\ref{lem:commutator_estimate_HS}) 
		\begin{equation*}
			\Nrm{\com{V_{v\,v_2},v_2}}{\L^2} \leq C \Nrm{\Dhv v_2}{\cW^{1,2}} \Nrm{\rho_{v\,v_2}}{L^2_x}.
		\end{equation*}
		Also, notice that
		\begin{align*}
			\Nrm{\rho_{v\,v_2}}{L^2_x} &= h^3 \(\intd \Big|\intd v(x,y)\,v_2(y,x)\dd y\Big|^2\d x\)^\frac{1}{2} \leq h^\frac{3}{2} \Nrm{v_2}{L^\infty_xL^2_y} \Nrm{v}{\L^2}.
		\end{align*}
		Hence, since $h^3\Nrm{v_2}{L^2_y}^2 = \Diag{\op_2} = \rho_2$, we deduce that
		\begin{equation*}
			I_2 \leq \Nrm{\Dhv v_2}{\cW^{1,2}} \Nrm{\rho_2}{L^\infty_x}^{\frac12} \Nrm{v}{\L^2}^2,
		\end{equation*}
		and, as in previous proposition, applying Gr\"onwall's lemma proves~\eqref{eq:L^2-L^2}. Inequality~\eqref{eq:L^2-L^1} follows from the identity $\op_1-\op_2=\frac{1}{2}(v_1-v_2)(v_1+v_2)+\frac{1}{2}(v_1+v_2)(v_1-v_2)$ and the Powers--St{\o}rmer inequality (see~\cite{powers_free_1970}).
	\end{proof}
	
	\subsubsection{A Semiclassical inequality for commutators}

	\begin{lem}\label{lem:commutator_estimate_HS}
		There exists a constant $C>0$ such that for any self-adjoint trace class operators $\op$ and $\opmu$, we have the following estimate
		\begin{equation*}
			\frac{1}{\hbar} \Nrm{\com{V_{\op},\opmu}}{\L^2} \leq C \Nrm{\rho}{L^2_x} \Nrm{\Dhv \opmu}{\cW^{1,2}}
		\end{equation*}
		where $\rho = \Diag{\op}$.
	\end{lem}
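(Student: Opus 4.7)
I would work entirely on the symbol side via the Wigner transform $g:=f_\opmu$. A direct computation starting from the kernel formula $f_{[V_\op,\opmu]}(x,\xi)=\int e^{-2\pi i\xi\cdot y/h}(V_\op(x+\tfrac{y}{2})-V_\op(x-\tfrac{y}{2}))\opmu(x+\tfrac{y}{2},x-\tfrac{y}{2})\dd y$, combined with the fundamental theorem of calculus
\[
 V_\op(x+\tfrac{y}{2})-V_\op(x-\tfrac{y}{2})=y\cdot\int_{-1/2}^{1/2}\nabla V_\op(x+yt)\dd t
\]
and the Fourier representation of $\nabla V_\op$, yields the exact formula
\[
 f_{[V_\op,\opmu]}(x,\xi)=i\hbar\int_{-1/2}^{1/2}\!\!\int \widehat{\nabla V_\op}(\eta)\,e^{2\pi ix\cdot\eta}\,\nabla_\xi g(x,\xi-ht\eta)\dd\eta\dd t.
\]
By the semiclassical Plancherel identity $\Nrm{[V_\op,\opmu]}{\L^2}=\Nrm{f_{[V_\op,\opmu]}}{L^2(\R^6)}$ and Minkowski's inequality in $t$, the task reduces to a uniform-in-$t$ $L^2$ bound on
\[
 T_t(x,\xi):=\int \widehat{\nabla V_\op}(\eta)\,e^{2\pi ix\cdot\eta}\,\nabla_\xi g(x,\xi-ht\eta)\dd\eta.
\]

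The crucial step is Plancherel in the $\xi$ variable. Thanks to the translation-modulation identity, the partial Fourier transform of $T_t$ in $\xi$ (with dual variable $\zeta$) is simply $\nabla V_\op(x+ht\zeta)\,\widehat{\nabla_\xi g}(x,\zeta)$, so that
\[
 \Nrm{T_t}{L^2(\R^6)}^2=\iint|\nabla V_\op(x+ht\zeta)|^2\,|\widehat{\nabla_\xi g}(x,\zeta)|^2\dd x\dd\zeta.
\]
The translation invariance of $\Nrm{\nabla V_\op}{L^6_x}$, together with H\"older's inequality in $x$ with dual exponents $6$ and $3$ applied pointwise in $\zeta$, yields $\Nrm{T_t}{L^2}\leq \Nrm{\nabla V_\op}{L^6}\,\Nrm{\widehat{\nabla_\xi g}}{L^2_\zeta L^3_x}$. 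Minkowski's inequality for mixed spaces ($L^2_\zeta L^3_x\hookrightarrow L^3_x L^2_\zeta$, since $2\leq 3$) followed by Plancherel in $\xi\leftrightarrow\zeta$ pointwise in $x$ then gives $\Nrm{T_t}{L^2}\leq \Nrm{\nabla V_\op}{L^6}\,\Nrm{\nabla_\xi g}{L^3_x L^2_\xi}$, uniformly in $t$.

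It remains to identify the two factors. Since $|\nabla K|\lesssim|x|^{-2}\in L^{3/2,\infty}(\R^3)$, the Hardy--Littlewood--Sobolev inequality applied to $\nabla V_\op=\nabla K*\rho$ yields $\Nrm{\nabla V_\op}{L^6}\leq C\Nrm{\rho}{L^2}$. The three-dimensional Sobolev embedding $H^{1/2}_x\hookrightarrow L^3_x$, applied pointwise in $\xi$ and combined with Minkowski, gives $\Nrm{\nabla_\xi g}{L^3_x L^2_\xi}\leq C\Nrm{\nabla_\xi g}{H^{1/2}_x L^2_\xi}\leq C\Nrm{\nabla_\xi g}{H^1(\R^6)}=C\Nrm{\Dhv\opmu}{\cW^{1,2}}$. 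Combining these two inequalities with the $T_t$ estimate and integrating in $t\in[-1/2,1/2]$ closes the bound.

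The main obstacle is to bypass the Moyal expansion in powers of $\hbar$: the naive symbolic expansion of $V_\op\star g-g\star V_\op$ generates derivatives $\nabla^k V_\op$ for $k\geq 2$, which are too singular to exist for a Coulomb potential with only $\rho\in L^2$ assumed. The exact integral representation combined with the Plancherel ``translation-modulation'' trick circumvents this entirely by converting the otherwise opaque $\xi$-translation of $g$ into a harmless $x$-translation of $\nabla V_\op$, which is invisible to the $L^6$ norm. The remark preceding the lemma---that $\cW^{1,2}$ may in fact be replaced by $H^{1/2}$---is a consequence of the interpolation step above, as only $H^{1/2}(\R^6)$ is truly needed.
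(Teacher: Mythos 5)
Your overall strategy---passing to the Wigner transform $g = f_\opmu$, writing the symbol of the commutator via the fundamental theorem of calculus, and taking a partial Fourier transform in $\xi$ to convert the $\xi$-translation of $g$ into an $x$-translation of $\nabla V_\op$---is sound, and it is essentially the Fourier-dual mirror image of the paper's own proof, which works on the kernel side and performs the change of variables $(x,y)\mapsto(x+\theta y,\, x-(1-\theta)y)$ to the same effect. The exact formula for $f_{[V_\op,\opmu]}$, the translation-modulation computation of the partial Fourier transform, and the Hardy--Littlewood--Sobolev bound $\Nrm{\nabla V_\op}{L^6}\lesssim\Nrm{\rho}{L^2}$ are all correct.

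However, there is a genuine gap in your Minkowski step. After H\"older you obtain $\Nrm{T_t}{L^2}\le\Nrm{\nabla V_\op}{L^6}\Nrm{\widehat{\nabla_\xi g}}{L^2_\zeta L^3_x}$, a mixed norm with outer exponent $2$ and inner exponent $3$. You then invoke Minkowski to pass to $\Nrm{\widehat{\nabla_\xi g}}{L^3_x L^2_\zeta}$ so as to undo the partial Fourier transform by Plancherel, but Minkowski's inequality goes the opposite way: since in $L^3_x L^2_\zeta$ the inner exponent $2$ is smaller than the outer exponent $3$, one has $\Nrm{\cdot}{L^3_x L^2_\zeta}\le\Nrm{\cdot}{L^2_\zeta L^3_x}$. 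The embedding you cite, while true, controls the wrong side; it cannot be used to bound the $L^2_\zeta L^3_x$ norm by the $L^3_x L^2_\zeta$ norm.

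The fix is to skip the $L^3_x L^2_\zeta$ intermediary entirely and apply the Sobolev embedding $H^{1/2}_x\hookrightarrow L^3_x$ pointwise in $\zeta$ directly to the $L^2_\zeta L^3_x$ norm, giving $\Nrm{\widehat{\nabla_\xi g}}{L^2_\zeta L^3_x}\le C\Nrm{(1-\Delta_x)^{1/4}\widehat{\nabla_\xi g}}{L^2_{x,\zeta}}$. The right-hand side is now a plain $L^2$ norm in both variables, so Plancherel in $\zeta\leftrightarrow\xi$ applies harmlessly and yields $C\Nrm{(1-\Delta_x)^{1/4}\nabla_\xi g}{L^2_{x,\xi}}\le C\Nrm{\nabla_\xi g}{H^1(\RRd)}=C\Nrm{\Dhv\opmu}{\cW^{1,2}}$, with $H^{1/2}$ in fact sufficing, in accordance with the remark after the theorem. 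This is exactly the order of operations in the paper's kernel-side argument: after the change of variables the norm is $L^2_y L^3_x$, and the Gagliardo--Nirenberg--Sobolev bound is applied pointwise in $y$, not after a Minkowski swap. With this correction your proof closes.
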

	
	\begin{proof}[Proof of Lemma~\ref{lem:commutator_estimate_HS}]
		We use a first order Taylor expansion of $V_{\op}$ of the form
		\begin{equation*}
			V_{\op}(x) = V_{\op}(y) + \(x-y\)\cdot \int_0^1 \nabla V_{\op}(z_\theta)\dd\theta
		\end{equation*}
		where $z_\theta = \(1-\theta\)x+\theta\, y$. This implies that $\frac{1}{i\hbar}\com{V_{\op},\opmu}$ is the operator with kernel
		\begin{equation*}
			\frac{1}{i\hbar}\com{V_{\op},\opmu}(x,y) = \int_0^1 \nabla V_{\op}(z_\theta) \cdot \Dhv \opmu(x,y) \dd\theta.
		\end{equation*}
		Its Hilbert--Schmidt norm being given by the $L^2$ norm of its kernel, and doing the change of variables $(x,y) \mapsto (x+\theta y, x-(1-\theta) y)$ with Jacobian equal to $1$, we obtain
		\begin{align*}
			\Nrm{\tfrac{1}{i\hbar}\com{V_{\op},\opmu}}{\L^2}^2 &\leq h^d\int_0^1\!\!\! \iintd \n{\nabla V_{\op}(x) \cdot \Dhv \opmu(x+\theta y,x-(1-\theta) y)}^2\d x\dd y \dd\theta
			\\
			&\leq h^d \Nrm{\nabla V_{\op}}{L_x^6}^2 \int_0^1\!\!\! \intd \Nrm{\Dhv \opmu(x+\theta y,x-(1-\theta) y)}{L^3_x}^2 \d y \dd\theta,
		\end{align*}
		where the second inequality follows by H\"older's inequality. The first factor on the right-hand side is controlled using the Hardy--Littlewood--Sobolev inequality by $\Nrm{\rho}{L^2_x}$. To control the second factor, we perform the change of variable $x\mapsto x-\theta y$ to get
		\begin{equation*}
			\int_0^1\!\!\! \intd \Nrm{\Dhv \opmu(x+\theta y,x-(1-\theta) y)}{L^3_x}^2 \d y \dd\theta = \intd \Nrm{\Dhv \opmu(x,x-y)}{L^3_x}^2 \d y.
		\end{equation*}
		The $L^3$ norm in this term is now bounded using the Gagliardo--Nirenberg--Sobolev inequality associated to the embedding $H^1\subset L^3$ which yields
		\begin{equation*}
			\Nrm{\Dhv \opmu(x,x-y)}{L^3_x}^2 \leq C \(\Nrm{\Dhv\opmu(x,x-y)}{L^2_x}^2 + \Nrm{\Dx(\Dhv \opmu(x,x - y))}{L^2_x}^2\).
		\end{equation*}
		However, notice now that the gradient with respect to $x$ appearing in the above formula is actually also a quantum gradient of the operator since
		\begin{equation*}
			\nabla_x(\Dhv\opmu(x,x-y)) = ((\nabla_1+\nabla_2)\Dhv\opmu)(x,x-y) = \com{\nabla,\Dhv{\opmu}}(x,x-y)
		\end{equation*}
		and this is exactly $\Dhx{\Dhv{\opmu}}(x,x-y)$. Using the fact that for both $\opnu = \Dhv\opmu$ and for $\opnu = \Dhx\Dhv\opmu$, by the Fubini theorem and the change of variables $y\mapsto x-y$ it holds
		\begin{align*}
			h^d \intd \Nrm{\opnu(x,x-y)}{L^2_x}^2 \d y = h^d \iintd \n{\opnu(x,x-y)}^2 \d x\dd y = \Nrm{\opnu}{\L^2}^2,
		\end{align*}
		we therefore arrive at the inequality
		\begin{equation*}
			\int_0^1\!\!\! \intd \Nrm{\Dhv \opmu(x+\theta y,x-(1-\theta) y)}{L^3_x}^2 \d y \dd\theta \leq C \Nrm{\Dhv \opmu}{\cW^{1,2}},
		\end{equation*}
		which concludes the proof.
	\end{proof}
	\medskip

\section{Semiclassical limit in \texorpdfstring{$\L^2$}{L2} Schatten norm}

	We consider $\op$ a solution of the Hartree equation~\eqref{eq:Hartree} and $f$ a solution of the Vlasov--Poisson equation~\eqref{eq:Vlasov}. Then, in the same spirit as~\cite{lafleche_strong_2021}, we will use the fact that the Weyl transform $\op_f$ of $f$ solves the equation
	\begin{equation}\label{eq:Weyl-Vlasov}
		i\hbar\,\dpt \op_f = \com{H_{f},\op_f} - B_f(\op_f) \quad \text{ with } \quad \op_f(0) = \op^\init_{f} := \op_{f^\init}
	\end{equation}
	where $B_f(\op_f)$ is the operator with kernel
	\begin{equation}\label{def:B_operator}
		B_f(\op_f)(x,y) = \(V_{f}(x) - V_{f}(y) - \(x-y\)\cdot\nabla V_{f}\Big(\frac{x+y}{2}\Big)\!\) \op_f(x,y).
	\end{equation}
	By the proof of \cite[Proposition~4.4]{lafleche_strong_2021}, we see that $B_f(\op_f)$ verifies the estimate
	\begin{equation}\label{eq:B_bound}
		\frac{1}{\hbar}\Nrm{B_f(\op_f)}{\L^2} \leq C\,\hbar\Nrm{\nabla E_f}{L^\infty_x} \Nrm{\nabla_\xi^2 f}{L^2},
	\end{equation}
	where $\Nrm{\nabla E_f}{L^\infty_x}$ is controlled by $\Nrm{\rho_f}{W^{1,\infty}}$. Let $\op$ be a solution to the Cauchy problem \eqref{eq:Hartree} and $\widetilde{\op}$ be a solution to the following Cauchy problem for the linear Hartree equation
	\begin{equation}\label{eq:linear-Hartree}
		i\hbar\,\dpt\widetilde{\op} = \com{H_f,\widetilde{\op}} \quad \text{ with } \quad \widetilde{\op}(0) = \widetilde{\op}^\init := \widetilde{\op}_{\sqrt{f^\init}}^2
	\end{equation}
	with Hamiltonian $H_f=-\frac{\hbar^2}{2}\,\Delta+V_{f}$. 
	
	\begin{remark}
		The choice $\widetilde{\op}^\init = \widetilde{\op}_{\sqrt{f^\init}}^2$ guarantees the following property. Defining $\tilde{v}^\init = \widetilde{\op}_{\sqrt{f^\init}}$, then it holds
		\begin{equation*}
			\Nrm{\Dh_{\!\eta} \tilde{v}^\init \,\opm}{\L^p} = \bNrm{\op_{g_h*\nabla_\eta\sqrt{f^\init}}\,\opm}{\L^p}
		\end{equation*}
		which, by \cite[Proposition 3.2]{lafleche_strong_2021}, is bounded by the regularity of $\sqrt{f^{\init}}$.
	\end{remark}
	
	Our aim is to show that the bound on the difference between $\op$ and $\op_f$ in $\L^2$ is of order $\hbar$. To this end, we consider $\widetilde{\op}$ the solution to the auxiliary problem \eqref{eq:linear-Hartree} with initial datum $\widetilde{\op}^\init = \widetilde{\op}_{\sqrt{f^\init}}^2$. Then, applying Minkowski's inequality yields
	\begin{equation}\label{eq:op-opf}
		\Nrm{\op-\op_f}{\L^2} \leq \Nrm{\op-\widetilde{\op}}{\L^2} + \Nrm{\widetilde{\op}-\op_f}{\L^2}.
	\end{equation}
	The second term on the right-hand side corresponds to a measure of the defect of positivity of the Weyl quantization of $f$. From the equations verified by $\widetilde{\op}$ and $\op_f$ and Inequality~\eqref{eq:B_bound}, it is bounded by Gr\"onwall's lemma by
	\begin{equation}\label{eq:optilde-opf}
		\Nrm{\widetilde{\op}-\op_f}{\L^2} \leq \Nrm{\widetilde{\op}^\init-\op_{f^\init}}{\L^2} + C\, \hbar \int_0^t\Nrm{\nabla E_f(s,\cdot)}{L^\infty_x}\Nrm{\nabla_{\xi}^2 f(s,\cdot)}{L^2} \d s,
	\end{equation}
	The first term on the right-hand side of Inequality~\eqref{eq:op-opf} is the true nonlinear error that corresponds to the stability estimate for the Hartree equation proved in the previous section. It can be readily estimated in a similar manner as in Section~\ref{sect:Qcase} by
	\begin{equation}\label{eq:sqrt-op_optilde}
		\Nrm{\op-\widetilde{\op}}{\L^2} \leq \bNrm{\sqrt{\op}-\sqrt{\widetilde{\op}}\,}{\L^2} \(\Nrm{\sqrt{\op}}{\L^\infty} + \bNrm{\sqrt{\widetilde{\op}}\,}{\L^\infty}\).
	\end{equation}
	We now estimate each term separately.

\subsection{An estimate on the lack of positivity}

	For the second term on the right-hand side of Equation \eqref{eq:optilde-opf}, $\Nrm{\nabla E_f}{L^\infty_x}$ and $\Nrm{\nabla^2_\xi f}{L^2}$ are bounded by \cite[Proposition~A.1]{lafleche_strong_2021}. As for the first term, the difference between the initial data $\widetilde{\op}^\init$ and $\op_f^\init$ can be written as follows
	\begin{equation*}
		\widetilde{\op}^\init - \op_f^\init = \widetilde{\op}_{\sqrt{f^\init}}^2 - \widetilde{\op}_f^\init + \widetilde{\op}_f^\init - \op_f^\init.
	\end{equation*}
	Since, by Inequality~\eqref{eq:Wick_Weyl_error}, we have that 
	\begin{equation*}
		\bNrm{\widetilde{\op}_f^\init - \op_f^\init}{\L^2} \leq \frac{3\,\hbar}{2} \Nrm{\nabla f^\init}{L^2},
	\end{equation*}
	it remains to estimate $\widetilde{\op}_{\sqrt{f^\init}}^2 - \widetilde{\op}_f^\init$ in $\L^2$. This is done via the following lemma.
	
	\begin{lem}\label{lem:commut_square_wick}
		Let $g\in  W^{1,2p}$ with $p\in[1,\infty]$. Then
		\begin{align*}
			\Nrm{\widetilde{\op}_{g^2} - \widetilde{\op}_g^2}{\L^p} \leq 48\,\hbar \Nrm{\nabla g}{L^{2p}}^2.
		\end{align*}
	\end{lem}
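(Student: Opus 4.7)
The plan is to use the coherent-state factorization $\widetilde{\op}_g = V^{\ast} M_g V$, where $V\colon L^2(\Rd)\to L^2(\RRd)$ is the isometry $(Vf)(z) = h^{-3/2}\Inprod{\psi_z}{f}$ and $P := VV^{\ast}$ is the orthogonal projection onto its range, to recast the left-hand side as the manifestly positive operator
\begin{equation*}
	\widetilde{\op}_{g^2} - \widetilde{\op}_g^2 = V^{\ast} M_g (I-P) M_g V = A_g^{\ast} A_g, \qquad A_g := (I-P)\,M_g V.
\end{equation*}
Inserting the resolution of identity $h^{-3}\intdd \ket{\psi_z}\bra{\psi_z}\dd z = I$ twice and symmetrizing in $(z,z')$ gives the equivalent integral representation
\begin{equation*}
	\widetilde{\op}_{g^2} - \widetilde{\op}_g^2 = \frac{1}{2\,h^6}\iintdd (g(z)-g(z'))^2\,\Inprod{\psi_z}{\psi_{z'}}\,\ket{\psi_z}\!\bra{\psi_{z'}}\dd z\dd z'.
\end{equation*}

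Next I establish the endpoint bounds. For $p=\infty$, testing against $f\in L^2(\Rd)$ of unit norm and setting $F=Vf$ (of unit $L^2(\RRd)$ norm, by isometry), using $|\Inprod{\psi_z}{\psi_{z'}}| = e^{-|z-z'|^2/(4\hbar)}$, the pointwise bound $(g(z)-g(z'))^2 \leq |z-z'|^2 \Nrm{\nabla g}{L^\infty}^2$, Young's convolution inequality, and the six-dimensional moment $\intdd |w|^2 e^{-|w|^2/(4\hbar)}\dd w = 768\pi^3\hbar^4$ together with $h=2\pi\hbar$, I obtain $\Nrm{A_g}{\infty}^2 = \Nrm{\widetilde{\op}_{g^2}-\widetilde{\op}_g^2}{\L^\infty}\leq 48\,\hbar\,\Nrm{\nabla g}{L^\infty}^2$. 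For $p=1$, positivity and Lemma~\ref{lem:properties_of_Wick} yield
\begin{equation*}
	\Nrm{A_g}{\L^2}^2 = h^3\Tr{A_g^\ast A_g} = h^3\Tr{\widetilde{\op}_{g^2}-\widetilde{\op}_g^2} = \Nrm{g}{L^2}^2 - \Nrm{g_h\ast g}{L^2}^2,
\end{equation*}
and Plancherel with $\widehat{g_h}(\zeta) = e^{-\pi^2\hbar|\zeta|^2}$ together with $1-e^{-x}\leq x$ deliver $\Nrm{A_g}{\L^2}^2 \leq \tfrac{\hbar}{2}\Nrm{\nabla g}{L^2}^2$.

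For general $p\in[1,\infty]$, I apply complex interpolation to the linear map $g\mapsto A_g$ between the endpoints $A\colon \dot W^{1,2}\to\L^2$ of norm $\leq \sqrt{\hbar/2}$ and $A\colon \dot W^{1,\infty}\to \L^\infty$ of norm $\leq \sqrt{48\hbar}$, in the Calder\'on scales $[\L^2,\L^\infty]_\theta = \L^{2/(1-\theta)}$ and $[\dot W^{1,2},\dot W^{1,\infty}]_\theta = \dot W^{1,2/(1-\theta)}$ at $\theta = 1-1/p$. This produces $\Nrm{A_g}{\L^{2p}}\leq (\hbar/2)^{1/(2p)}(48\hbar)^{(1-1/p)/2}\Nrm{\nabla g}{L^{2p}}$, and squaring via the Schatten identity $\Nrm{A^\ast A}{\L^p}=\Nrm{A}{\L^{2p}}^2$ yields
\begin{equation*}
	\Nrm{\widetilde{\op}_{g^2}-\widetilde{\op}_g^2}{\L^p} \leq (\hbar/2)^{1/p}(48\hbar)^{1-1/p}\Nrm{\nabla g}{L^{2p}}^2 \leq 48\,\hbar\,\Nrm{\nabla g}{L^{2p}}^2,
\end{equation*}
where the last step uses $(\hbar/2)^{1/p}(48\hbar)^{1-1/p} = 48\hbar/96^{1/p}\leq 48\hbar$.

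The main obstacle is the $L^\infty$ endpoint in the Sobolev scale of the complex interpolation, which requires some care. This is most cleanly sidestepped either by replacing $\dot W^{1,\infty}$ by the closure of $\dot W^{1,2}\cap \dot W^{1,\infty}$ in $\Nrm{\nabla\,\cdot\,}{L^\infty}$, or equivalently by passing to the bilinear map $B(g,h) := \widetilde{\op}_{gh}-\widetilde{\op}_g\widetilde{\op}_h = A_g^\ast A_h$ (whose diagonal equals the left-hand side), whose endpoint bounds $\Nrm{B(g,h)}{\L^1}\leq \Nrm{A_g}{\L^2}\Nrm{A_h}{\L^2}$ and $\Nrm{B(g,h)}{\L^\infty}\leq \Nrm{A_g}{\L^\infty}\Nrm{A_h}{\L^\infty}$ coming from H\"older in Schatten classes can be interpolated bilinearly to $\dot W^{1,2p}\times\dot W^{1,2p}\to\L^p$ before setting $g=h$.
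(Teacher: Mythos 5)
Your proof is correct, and it takes a genuinely cleaner route than the paper's, which does not introduce the coherent-state transform $V$ or exploit positivity structurally. The paper works directly with the bilinear symmetrized remainder $\Lambda(f,g) = \widetilde{\op}_{fg} - \tfrac{1}{2}(\widetilde{\op}_f\widetilde{\op}_g + \widetilde{\op}_g\widetilde{\op}_f)$, obtains the same integral representation with the Gaussian overlap $G(z,z')$, bounds it in $\L^1$ by applying Minkowski (using $\nrm{\ket{\psi_z}\bra{\psi_{z'}}}_1 = 1$) and Cauchy--Schwarz (which gives $48\,\hbar\Nrm{\nabla f}{L^2}\Nrm{\nabla g}{L^2}$), bounds it in $\L^\infty$ by pairing against $\psi_\phi(z)\,\psi_\varphi(z')$ with Young's inequality (giving the same $48\,\hbar$), and concludes by complex \emph{bilinear} interpolation. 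Your observation that $\widetilde{\op}_{g^2} - \widetilde{\op}_g^2 = A_g^\ast A_g$ with $A_g = (I-P)M_gV$ makes the positivity manifest, lets you interpolate the \emph{linear} map $g\mapsto A_g$ between $\L^2$ and $\L^\infty$, and then recover the $\L^p$ bound by the identity $\Nrm{A^\ast A}{\L^p} = \Nrm{A}{\L^{2p}}^2$ — a more standard and transparent interpolation than the bilinear one. Your $\L^1$ (i.e.\ $\L^2$-for-$A_g$) endpoint is also sharper: by computing $h^3\Tr(A_g^\ast A_g) = \Nrm{g}{L^2}^2 - \Nrm{g_h\ast g}{L^2}^2$ exactly and applying Plancherel with $\widehat{g_h}(\zeta) = e^{-\pi^2\hbar|\zeta|^2}$ and $1-e^{-x}\le x$, you get constant $\hbar/2$ where the paper's Minkowski/Cauchy--Schwarz route only gives $48\,\hbar$, because the trace identity captures the cancellations in the oscillatory phase of $G$ that Minkowski throws away. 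Both proofs face the same technical wrinkle with the $L^\infty$ endpoint of the homogeneous Sobolev scale in complex interpolation; you correctly flag it, and either of your fixes (closure of the intersection, or passing to the bilinear $B(g,h) = A_g^\ast A_h$ — which, note, is exactly the asymmetric version of the paper's $\Lambda(f,g)$) suffices since for $p<\infty$ one only needs $\theta<1$.
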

	
	\begin{proof}
		Notice that for any $(f,g)\in (L^1+ L^\infty)^2$,
		\begin{equation*}
			\widetilde{\op}_{f}\,\widetilde{\op}_{g} = h^{-6} \iintd f\,g' \,G(z,z') \, \ket{\psi_{z}}\bra{\psi_{z'}} \dd z \dd z'
		\end{equation*}
		where $f = f(z), g' = g(z')$ and 
		\begin{equation*}
			G(z,z') = \Inprod{\psi_z}{\psi_{z'}} = \(\tfrac{2}{h}\)^{\frac32} \intd e^{-\(\n{y-x}^2+\n{y-x'}^2\)/(2\hbar)}\, e^{i\,y\cdot\(\xi'-\xi\)/\hbar} \dd y.
		\end{equation*}
		Using the parallelogram identity, we can evaluate the integral in $y$ of $G$ as follows 
		\begin{align*}
			G(z,z') &= \(\tfrac{2}{h}\)^{\frac32} e^{-\n{\frac{x-x'}{2}}^2/\hbar} \intd e^{-\n{y-\frac{x+x'}{2}}^2/\hbar}\, e^{i\,y\cdot\(\xi'-\xi\)/\hbar} \dd y\\
			&= \(\tfrac{2}{h}\)^{\frac32} e^{-\n{\frac{x-x'}{2}}^2/\hbar} e^{i\,\(x+x'\)\cdot\(\xi'-\xi\)/(2\hbar)} \F{e^{-2\pi\n{y}^2/h}}(\tfrac{\xi-\xi'}{h})
			\\
			&= e^{-\n{\frac{z-z'}{2}}^2/\hbar} e^{i\,\(x+x'\)\cdot\(\xi'-\xi\)/(2\hbar)}.
		\end{align*}
		Notice also that $\widetilde{\op}_1 = \mathds{1}$ is the identity operator. Hence, we write
		\begin{align*}
			\widetilde{\op}_{fg} &= \frac{\widetilde{\op}_{fg}\, \widetilde{\op}_1 + \widetilde{\op}_1\, \widetilde{\op}_{fg}}{2} = h^{-6} \iintd \frac{f'g'+fg}{2} \,G(z,z') \ket{\psi_{z}}\bra{\psi_{z'}} \d z \dd z'.
		\end{align*}
		Therefore, since $f\,g + f'\,g' - f\,g' - f'\,g = \(f-f'\) \(g-g'\)$, we obtain
		\begin{align*}
			\Lambda(f,g) &:= \widetilde{\op}_{fg} - \frac{\widetilde{\op}_f\,\widetilde{\op}_g + \widetilde{\op}_g\,\widetilde{\op}_f}{2}
			\\
			&= h^{-6} \iintd \frac{\(f-f'\)\(g-g'\)}{2} \,G(z,z')  \ket{\psi_{z}}\bra{\psi_{z'}} \d z \dd z'.
		\end{align*}
		To obtain bounds on $\Lambda(g,g) = \widetilde{\op}_{g^2} - \widetilde{\op}_g^2$, we will now bound $\Lambda(f,g)$ by bilinear interpolation.
		
		\step{Trace norm estimate} Using Minkowski's inequality and the fact that the trace norm of $\ket{\psi_{z}}\bra{\psi_{z'}}$ is 1, we obtain the following trace norm estimate 
		\begin{multline*}
			\Nrm{\Lambda(f,g)}{\L^1} \leq \frac{h^{-3}}{2} \iint_{[0,1]^2}\iintdd \n{\nabla f(z_\theta)} \n{\nabla g(z_{\theta'})} G_2(z-z') \dd z \dd z' \dd \theta\dd \theta'
			\\
			\leq \frac{h^{-3}}{2} \(\iintdd \n{\nabla f(u)}^2 G_2(v) \dd u \dd v\)^\frac{1}{2} \(\iintdd \n{\nabla g(u)}^2 G_2(v) \dd u \dd v\)^\frac{1}{2}
			\\
			\leq 48\,\hbar \Nrm{\nabla f}{L^2} \Nrm{\nabla g}{L^2}
		\end{multline*}
		where $G_2(z) = \n{z}^2 e^{-\n{\frac{z}{2}}^2/\hbar}$ and $z_\theta = \(1-\theta\) z + \theta\, z'$.
		\step{Operator norm estimate} Let $(\varphi,\phi)\in (L^2)^2$ and define $\psi_\varphi(z) := \Inprod{\psi_z}{\varphi}$. Then
		\begin{align*}
			\Inprod{\phi}{\Lambda(f,g)\,\varphi} &= h^{-6} \iintdd \frac{\(f-f'\)\(g-g'\)}{2} \,G(z,z') \, \psi_{\phi}(z)\,\psi_{\varphi}(z') \dd z \dd z'
			\\
			&\leq h^{-6} \Nrm{\nabla f}{L^{\infty}}\Nrm{\nabla g}{L^{\infty}}\iintdd G_2(z-z') \n{\psi_{\phi}(z)}\n{\psi_{\varphi}(z')} \d z \dd z'
			\\
			&\leq h^{-6} \Nrm{\nabla f}{L^{\infty}}\Nrm{\nabla g}{L^{\infty}}\Nrm{G_2}{L^1} \Nrm{\psi_{\phi}}{L^2}\Nrm{\psi_{\varphi}}{L^2}
		\end{align*}
		where the last inequality follows from Young's inequality. Observing that $\psi_\varphi(z) = \F{\psi_0(x-\cdot)\,\varphi}(\xi/h)$, we deduce that $\Nrm{\psi_\varphi}{L^2} = h^{\frac32} \Nrm{\varphi}{L^2}$ and so 
		\begin{equation*}
			\Nrm{\Lambda(f,g)}{\L^\infty} \leq 48\,\hbar \Nrm{\nabla f}{L^{\infty}}\Nrm{\nabla g}{L^{\infty}}.
		\end{equation*}
		
		\step{General Schatten norms} By the complex bilinear interpolation (see e.g. \cite[Section~4.4]{bergh_interpolation_1976}), we deduce that for any $p\in [1,\infty]$,
		\begin{equation*}
			\Nrm{\Lambda(f,g)}{\L^p} \leq 48\,\hbar \Nrm{\nabla f}{L^{2p}}\Nrm{\nabla g}{L^{2p}}
		\end{equation*}
		which leads to the desired result by taking $f=g$.
	\end{proof}

\subsection{Bound on \texorpdfstring{$\Nrm{\op-\protect\widetilde{\op}}{\L^2}$}{auxiliary and Hartree} }\label{sec:op-opt}

	In Equation \eqref{eq:sqrt-op_optilde}, $\Nrm{\sqrt{\op}}{\L^\infty}$ and $\bNrm{\sqrt{\widetilde{\op}}}{\L^\infty}$ are bounded because $\op\in\L^\infty$. The main result of this section is Proposition~\ref{prop:sqrt-op-vs-opt} which gives an estimate for $\bNrm{\sqrt{\op}-\sqrt{\widetilde{\op}}}{\L^2}$ in terms of $\hbar$.
	
	To prove the proposition, we need to first establish some preliminary results. It will be useful to exchange the role of $x$ and $\xi$ to manipulate weights of the form $\weight{x}$ instead of weights of the form $\weight{\opp}$. The quantum analogue of exchanging the $x$ and $\xi$ variables is obtained by conjugation with the semiclassical Fourier transform $\ecF_h\varphi(\xi):= h^{\frac32}\,\widehat{\varphi}(h\xi)$. More precisely, defining $\op^\star := \ecF^{-1}_h\op\, \ecF_h$, then it holds
	\begin{equation}\label{eq:property_op_star}
		\op_f^\star  = \op_{f^\star} \quad \text{ with } \quad f^\star(x, \xi) = f(\xi, x).
	\end{equation}
	This exchange operation is a linear automorphism which preserves the $\L^p$ norms. Moreover, from the definition of a Fourier multiplier, $\weight{\opp}\ecF_h=\ecF_h\weight{x}$ and so $\weight{x}^\star = \weight{\opp}$ and $\weight{\opp}^\star = \weight{x}$.

	\begin{proof}[Proof of~\eqref{eq:property_op_star}]
		Computing the kernel of $\op_f^\star$ yields
		\begin{equation*}
			\op_f^\star(x,y) = \frac{1}{h^3}\int_{\RR^9} e^{2\pi i\, \(x\cdot x'-y'\cdot y\)/h}\,e^{-2\pi i\, \(y'-x'\)\cdot\xi}\, f\!\(\tfrac{x'+y'}{2}, h\,\xi\) \d x'\dd y'\dd \xi
		\end{equation*}
		Therefore, by the change of variables $\zeta = \frac{x'+y'}{2\,h}$ and $\eta = x'-y'$
		\begin{equation*}
			\op_f^\star(x,y) = \intdd e^{-2\pi i\, \(y-x\)\cdot\zeta/h} \, e^{2i\pi\eta\cdot\frac{x+y}{2\,h}}\,\ecF\!\(f(h\,\zeta, h\,\cdot)\)\!(\eta) \dd \eta\dd \zeta
		\end{equation*}
		and we deduce Equation~\eqref{eq:property_op_star} by the Fourier inversion theorem.
	\end{proof}
	
	We can now use the above defined operation to prove the following lemma.

	\begin{lem}\label{lem:Weyl_and_weight}
		Let $f\in H^1$ be a function on the phase space. Then the following estimates hold
		\begin{subequations}
			\begin{align}\label{eq:p-weight_remainder}
				\Nrm{\op_{\weight{\xi} f}-\op_{f}\weight{\opp}}{\L^2} &\le \tfrac{\hbar}{2}\Nrm{\grad_{x} f}{L^2},
				\\\label{eq:p-weight_remainder_2}
				\Nrm{\weight{\opp}\op_f \weight{\opp} -\op_{\weight{\xi}^2f}}{\L^2} &\le \tfrac{\hbar^2}{4}\Nrm{\Delta_x f}{L^2}.
			\end{align}
			Notice that the same inequality holds when we replace $\op_{f}\weight{\opp}$ by $\weight{\opp}\op_{f}$, which follows by taking the adjoint.
		\end{subequations}
	\end{lem}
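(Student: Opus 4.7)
The plan is to use the star operation $\op \mapsto \op^\star = \ecF_h^{-1}\op\,\ecF_h$, which is an isometry on $\L^2$ satisfying $\op_f^\star = \op_{f^\star}$ and $\weight{\opp}^\star = \weight{x}$, to reduce both estimates to ones in which $\weight{\opp}$ is replaced by multiplication by $\weight{x}$. Setting $g := f^\star$, and noting that the derivatives transform as $\Nrm{\nabla_x f}{L^2} = \Nrm{\nabla_\xi g}{L^2}$ and likewise for the Laplacian, it suffices to show
\begin{align*}
	\Nrm{\op_{\weight{x}g} - \op_g\weight{x}}{\L^2} &\le \tfrac{\hbar}{2}\Nrm{\nabla_\xi g}{L^2},
	\\
	\Nrm{\op_{\weight{x}^2g} - \weight{x}\op_g\weight{x}}{\L^2} &\le \tfrac{\hbar^2}{4}\Nrm{\Delta_\xi g}{L^2}.
\end{align*}

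The point of this reduction is that multiplication by $\weight{x}$ acts pointwise on kernels. Directly from the Weyl formula, $\op_{\weight{x}g}(x,y) = \weight{(x+y)/2}\,\op_g(x,y)$, while right-multiplication by $\weight{x}$ introduces a factor $\weight{y}$; thus the two differences above have kernels $\bigl(\weight{(x+y)/2} - \weight{y}\bigr)\op_g(x,y)$ and $\bigl(\weight{(x+y)/2}^2 - \weight{x}\weight{y}\bigr)\op_g(x,y)$ respectively. I then bound these pointwise via two scalar inequalities. The first, $|\weight{(x+y)/2} - \weight{y}| \le |x-y|/2$, is immediate from $\weight{\cdot}$ being $1$-Lipschitz. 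The second, $|\weight{x}\weight{y} - \weight{(x+y)/2}^2| \le |x-y|^2/4$, requires more work: its upper half follows from AM--GM together with the parallelogram-type identity $\weight{x}^2 + \weight{y}^2 = 2\weight{(x+y)/2}^2 + |x-y|^2/2$; its lower half reduces, after squaring and with $u = (x+y)/2$, $v = (x-y)/2$, to the algebraic identity
\[
	\weight{x}^2\weight{y}^2 - \bigl(\weight{u}^2 - |v|^2\bigr)^2 = 4\bigl(|v|^2 + |u|^2|v|^2 - (u\cdot v)^2\bigr),
\]
which is nonnegative by Cauchy--Schwarz.

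To turn the pointwise kernel estimates into $L^2$-norms of derivatives of $g$, I use the moment identity
\[
	(y-x)_j\,\op_g(x,y) = \frac{\hbar}{i}\op_{\partial_{\xi_j}g}(x,y),
\]
obtained by integration by parts in $\xi$ inside the Weyl formula; iterating and summing over $j$ gives $|x-y|^2\op_g(x,y) = -\hbar^2\op_{\Delta_\xi g}(x,y)$. Together with the isometry $\Nrm{\op_h}{\L^2} = \Nrm{h}{L^2}$ and Plancherel's theorem, this yields
\[
	h^3\iintd |x-y|^{2k}|\op_g(x,y)|^2\dd x\dd y = \hbar^{2k}\Nrm{\nabla_\xi^k g}{L^2}^2 \quad \text{for } k = 1,2,
\]
from which the two inequalities follow after combining with the pointwise kernel bounds. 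The adjoint variant stated at the end of the lemma then follows from $\Nrm{A}{\L^2} = \Nrm{A^*}{\L^2}$ applied to $A = \op_{\weight{\xi}f} - \op_f\weight{\opp}$.

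The main technical point is the lower half of the quadratic weight inequality: the Lipschitz bound, the AM--GM bound, and the moment identity are all routine, while the lower bound on $\weight{u}^2 - \weight{x}\weight{y}$ requires the direct algebraic expansion above rather than a soft estimate.
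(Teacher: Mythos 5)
Your proof is correct and follows essentially the same approach as the paper: reduce to multiplication by $\weight{x}$ via the star operation $\op \mapsto \ecF_h^{-1}\op\,\ecF_h$, compute the difference at the kernel level, and convert the resulting factors of $(x-y)$ into $\hbar\nabla_\xi$ acting on the symbol via integration by parts. The only cosmetic difference is that the paper packages the weight factor as an $x$-dependent Fourier multiplier $a_x$ (resp. $b_x$) of norm at most one, whereas you bound it pointwise via the Lipschitz and AM--GM/Cauchy--Schwarz scalar inequalities and then apply the moment identity and Plancherel -- the two arguments encode the same boundedness statement, and you make explicit the $|b_x|\le 1$ verification that the paper only asserts.
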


	\begin{proof}
		Notice the kernel of the difference $\op_{\weight{x}f}-\op_{f}\weight{x}$ is given by 
		\begin{multline*}
			\intd e^{-2\pi i \(y-x\)\cdot \xi}\(\weight{\tfrac{x+y}{2}}-\weight{y}\)f\(\tfrac{x+y}{2}, h\xi\)\d \xi
			\\
			= \frac{i\hbar}{2}\frac{(\frac{x+y}{2})+ y}{\weight{\tfrac{x+y}{2}}+\weight{y}}\cdot \intd e^{-2\pi i \(y-x\)\cdot \xi}\, \grad_{\xi}f\(\tfrac{x+y}{2}, h\xi\)\dd \xi = \op_r
		\end{multline*}
		where $r \in L^2$ is defined by $r(z)=\tfrac{i\hbar}{2}\,(a_x(i\hbar\, \grad_\xi)\cdot\grad_{\xi}f)(z)$ where $a_x(i\hbar\, \grad_\xi)$ is a bounded, $x$-dependent Fourier multiplier of the $\xi$ variable associated to the function $a_x(\eta) = \frac{2\,x- \eta/2}{\weight{x}+\weight{x-\eta/2}}$. More explicitly
		\begin{equation*}
			r(z) = \frac{i\hbar}{2}\intdd e^{2\pi i\(\xi-\xi'\)\cdot \eta}\, a_x(h\eta)\cdot \grad_{\xi}f(x, \xi')\dd\xi'\dd\eta.
		\end{equation*}
		In particular, it follows that
		\begin{equation}\label{eq:x-weight_remainder}
			\Nrm{\op_{\weight{x} f}-\op_{f}\weight{x}}{\L^2} = \Nrm{\op_{r}}{\L^2} =  \Nrm{r}{L^2} \le \tfrac{\hbar}{2} \Nrm{\grad_{\xi}f}{L^2}.
		\end{equation}
		We now use Inequality~\eqref{eq:x-weight_remainder} together with the properties of the exchange operation~\eqref{eq:property_op_star} to get
		\begin{align*}
			\Nrm{\op_{\weight{\xi} f}-\op_{f}\weight{\opp}}{\L^2} &= \Nrm{\op_{\weight{\xi} f}^\star-\op_{f}^\star\weight{x}}{\L^2} = \Nrm{\op_{\weight{x} f^\star}-\op_{f^\star}\weight{x}}{\L^2}
			\\
			&\le \hbar \Nrm{\grad_{\xi}f^\star}{L^2} = \hbar \Nrm{(\grad_{x}f)^\star}{L^2} = \hbar \Nrm{\grad_{x}f}{L^2},
		\end{align*}
		which completes the proof of Inequality~\eqref{eq:p-weight_remainder}. To get the second inequality, a similar computation gives that
		\begin{equation*}
			\weight{x}\op_f \weight{x} -\op_{\weight{x}^2f} = \op_{r_2}
		\end{equation*}
		with $r_2(z)=\tfrac{-\hbar^2}{4}\,(b_x(i\hbar\, \grad_\xi)\cdot\Delta_{\xi}f)(z)$, where $b_x$ is the bounded function given by
		\begin{equation*}
			b_x(2\eta) = \tfrac{2\n{x}^2-2-\n{\eta}^2}{\weight{x}^2+\weight{x+\eta}\weight{x-\eta}} = \tfrac{\n{x}^2-2+\(x+\eta\)\(x-\eta\)}{\weight{x}^2+\weight{x+\eta}\weight{x-\eta}},
		\end{equation*}
		and the same reasoning leads to Inequality~\eqref{eq:p-weight_remainder_2}.
	\end{proof}

	Another simple observation is that the commutator between convolution with the Gaussian function $g_h(z) = \(\pi\hbar\)^{-3}e^{-\n{z}^2/\hbar}$ and multiplication by a weight is also of order $\hbar$.
	\begin{lem}\label{lem:p-weight_remainder_gaussian}
		Let $h\in(0,1)$. Then there exists $C>0$ such that for any $p\in[1,\infty]$ and any function of the phase space $f \in W^{1,p}$ it holds
		\begin{subequations}
			\begin{align}\label{eq:p-weight_remainder_gaussian}
				\Nrm{\com{\weight{\xi},\sim}f}{W^{2,p}} \leq C\,\hbar \Nrm{f}{W^{3,p}}
				\\\label{eq:p-weight_remainder_gaussian_2}
				\Nrm{\com{\langle\xi\rangle^2,\sim}f}{L^p} \leq C\,\hbar \Nrm{f}{W^{1,p}_1}
			\end{align}
		\end{subequations}
		where $\com{m,\sim} f := m\(g_h*f\) -  \(g_h*(mf)\)$.
	\end{lem}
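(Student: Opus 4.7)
The strategy is to first establish a \emph{base commutator estimate}: for any $m \in C^2(\R^3_\xi)$ with bounded first and second derivatives,
\begin{equation*}
    \Nrm{\com{m,\sim}f}{L^p} \leq C\,\hbar \(\Nrm{\nabla m}{L^\infty} \Nrm{\nabla_\xi f}{L^p} + \Nrm{\nabla^2 m}{L^\infty} \Nrm{f}{L^p}\).
\end{equation*}
To derive it, I start from $\com{m,\sim}f(z) = \int g_h(z-z')(m(\xi)-m(\xi'))f(z')\dd z'$ and use the first-order Taylor formula $m(\xi) - m(\xi') = (\xi-\xi')\cdot N(\xi,\xi')$ with $N(\xi,\xi') = \int_0^1 \nabla m((1-\theta)\xi' + \theta\xi)\dd\theta$. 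Next I exploit the Gaussian identity $(\xi-\xi')\,g_h(z-z') = \frac{\hbar}{2}\,\nabla_{\xi'}g_h(z-z')$ and integrate by parts in $\xi'$, obtaining a convolution against $g_h$ of a combination of $f$ and $\nabla_\xi f$ weighted by $N$ and $\nabla_{\xi'}\cdot N$; these are controlled by $\Nrm{\nabla m}{L^\infty}$ and $\Nrm{\nabla^2 m}{L^\infty}$ respectively. Minkowski's inequality then yields the claim.

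For the first inequality, I observe that $\partial_{x_i}$ commutes with $\com{\weight{\xi},\sim}$, and that for $\partial_{\xi_i}$ a Leibniz-type identity holds:
\begin{equation*}
    \partial_{\xi_i}\com{\weight{\xi},\sim}f = \com{\weight{\xi},\sim}\partial_{\xi_i}f + \com{\partial_{\xi_i}\weight{\xi},\sim}f.
\end{equation*}
Iterating for $|\alpha| \leq 2$ expresses $\partial^\alpha \com{\weight{\xi},\sim}f$ as a finite sum of terms of the form $\com{\partial^\beta\weight{\xi},\sim}\partial^{\alpha-\beta}f$ with $\beta \leq \alpha$. Since $\weight{\xi}$ is smooth and all its derivatives of order $\geq 1$ are bounded, every symbol $\partial^\beta\weight{\xi}$ satisfies the hypotheses of the base estimate. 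Each piece is therefore bounded by $C\hbar\,\Nrm{\partial^{\alpha-\beta}f}{W^{1,p}} \leq C\hbar\,\Nrm{f}{W^{3,p}}$.

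The main obstacle is the second inequality, because $\nabla(\weight{\xi}^2) = 2\xi$ is unbounded and so the base estimate does not apply directly to $m = \weight{\xi}^2$. The key is the algebraic identity
\begin{equation*}
    \com{\weight{\xi}^2,\sim}f = \weight{\xi}\com{\weight{\xi},\sim}f + \com{\weight{\xi},\sim}\!(\weight{\xi}f),
\end{equation*}
which reduces the problem to commutators involving only $\weight{\xi}$. The second term is handled immediately by the base estimate applied to $\weight{\xi}f$, yielding $\Nrm{\com{\weight{\xi},\sim}(\weight{\xi}f)}{L^p} \leq C\hbar\,\Nrm{\weight{\xi}f}{W^{1,p}} \leq C\hbar\,\Nrm{f}{W^{1,p}_1}$. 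For the first term I revisit the integral representation from the proof of the base estimate, bring the outer weight $\weight{\xi}$ inside, and use the triangle-type bound $\weight{\xi} \leq \weight{\xi'} + |\xi - \xi'|$ to transfer the weight onto the $z'$ variable. The $\weight{\xi'}$ piece then gives $C\hbar\,\Nrm{f}{W^{1,p}_1}$ directly after the change of variable $z' = z - u$, while the $|\xi - \xi'|$ piece produces an error of size $C\hbar \int g_h(u)|u_\xi|\dd u\,\Nrm{f}{W^{1,p}} = O(\hbar^{3/2})\Nrm{f}{W^{1,p}}$, which is absorbed into $C\hbar\,\Nrm{f}{W^{1,p}_1}$ since $\hbar \leq 1$.
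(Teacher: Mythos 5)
Your proof is correct. The overall skeleton (a base commutator estimate for a generic symbol $m$ with bounded $\nabla m$ and $\nabla^2 m$, then Leibniz for the derivatives, then a reduction for $\weight{\xi}^2$) matches the paper, but two of the key steps are carried out by genuinely different arguments. For the base estimate, you convert the factor $\xi-\xi'$ into a $\xi'$-derivative of the Gaussian via $(\xi-\xi')\,g_h(z-z') = \tfrac{\hbar}{2}\nabla_{\xi'}g_h(z-z')$ and integrate by parts, pushing one derivative onto $N$ and $f$; the paper instead writes a second-order Taylor expansion of $m$ and, for the first-order term, exploits the vanishing mean of $\xi'\,g_h(\xi')$ to replace $f(z-z')$ by $f(z-z')-f(z)$ and Taylor-expand $f$. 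Both mechanisms produce the same $\hbar$ gain. For the second inequality, you use the algebraic identity $\com{\weight{\xi}^2,\sim}f = \weight{\xi}\com{\weight{\xi},\sim}f + \com{\weight{\xi},\sim}(\weight{\xi}f)$ and a weight-shifting bound $\weight{\xi}\le\weight{\xi'}+|\xi-\xi'|$, whereas the paper exploits the polynomial structure directly: since $\weight{\xi}^2=1+|\xi|^2$ the commutator reduces to $\com{|\xi|^2,\sim}$, and $|\xi|^2-|\xi-\xi'|^2 = 2\xi\cdot\xi'-|\xi'|^2$ is finite, so no Taylor remainder is needed. The paper's version is cleaner for this specific quadratic weight; your factorization is more general and would extend to any $m^2$ with $\nabla m,\nabla^2 m$ bounded, at the cost of the extra weight-shifting step to control $\weight{\xi}\com{\weight{\xi},\sim}f$. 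Both routes land on the same $C\hbar\|f\|_{W^{1,p}_1}$ bound.
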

	
	\begin{proof}
		For any $z=(x,\xi)\in\Rdd$ and any general $m(\xi)$, it holds 
		\begin{equation*}
			\com{m,\sim} f(z) = \intd g_h(z') \(m(\xi)-m(\xi-\xi')\) f(z-z')\dd z'.
		\end{equation*}
		By a second order Taylor expansion, this can be written as
		\begin{equation*}
			\com{m,\sim} f = \intd g_h(z')\, \xi'\cdot\(\nabla m(\xi)- \int_0^1\theta\, \xi'\cdot \nabla^2m(\xi-\theta\xi')\dd\theta \) f(z-z')\dd z' = I_1 - I_2.
		\end{equation*}
		Next, using the fact that that $g_h$ is even, one can replace $f(z-z')$ by $f(z-z')-f(z)$ in $I_1$, leading to
		\begin{equation*}
			I_1 = \nabla m(\xi)\cdot \int_0^1\!\!\!\intd \xi'\,g_h(z')\, \xi' \cdot\,\Dv f(z-\theta \xi')\dd z' \dd\theta.
		\end{equation*}
		By Minkowski's inequality, we obtain
		\begin{align*}
			\Nrm{I_1}{L^p} \le \Nrm{\nabla m}{L^\infty} \,\bNrm{\!\n{\xi}^2g_h}{L^1} \Nrm{\Dv f}{L^p} = \tfrac{3}{2}\,\hbar\,\Nrm{\nabla m}{L^\infty}\Nrm{\Dv f}{L^p}.
		\end{align*}
		Similarly, $I_2$ is bounded by
		\begin{align*}
			\Nrm{I_2}{L^p} \le \frac{1}{2}\Nrm{\nabla^2 m}{L^\infty}  \,\bNrm{\!\n{\xi}^2g_h}{L^1}\Nrm{f}{L^p} = \tfrac{3}{4}\,\hbar\,\Nrm{\nabla^2 m}{L^\infty}\Nrm{f}{L^p}.
		\end{align*}
		In particular, if $m = \weight{\xi}$, then it yields $\Nrm{\com{m,\sim} f}{L^p} \leq 3\,\hbar \Nrm{f}{W^{1,p}}$. Noticing that $\Dx^2\com{m,\sim} f = \com{m,\sim} \Dx^2 f$ also yields $\Nrm{\Dx^2\com{m,\sim} f}{L^p} \leq 3\,\hbar \Nrm{\Dx^2 f}{W^{1,p}}$. Finally, noticing that
		\begin{equation*}
			\Dv^2\com{m,\sim} f = \com{m,\sim} \Dv^2 f + 2\com{\nabla m,\sim} \Dv f + \com{\nabla^2 m,\sim} f
		\end{equation*}
		and using the above estimates with $m$ replaced by $\nabla m$ and $\nabla^2 m$ leads to 
		\begin{equation*}
			\Nrm{\Dv^2\com{m,\sim} f}{L^p} \leq 36 \,\hbar \Nrm{f}{H^3}	
		\end{equation*}
		and so to Inequality~\eqref{eq:p-weight_remainder_gaussian}. To get Inequality~\eqref{eq:p-weight_remainder_gaussian_2}, notice that $\com{\weight{\xi}^2,\sim}f = \com{\n{\xi}^2,\sim}f$ and write
		\begin{equation*}
			\com{\n{\xi}^2,\sim}f = \intdd g_h(z')\(\xi'\)^{\otimes 2} : \(f(z-z')\,\Id + 2\, \xi\otimes \int_0^1 \Dv f(z-\theta \,\xi')\dd\theta\)\d z'
		\end{equation*}
		so that 
		\begin{align*}
			\Nrm{\com{\n{\xi}^2,\sim}f}{L^p}\le \tfrac{3\,\hbar}{2} \(\Nrm{f}{L^p} + 2 \Nrm{\n{\xi}\Dv f}{L^p}\) + \tfrac{16\,\hbar^{\frac32}}{\sqrt{\pi}} \Nrm{\Dv f}{L^p},
		\end{align*}
		and this concludes the proof.
	\end{proof}

	\begin{lem}\label{lem:init_diff_diag}
		Let $h\in(0,1)$. Then there exists $C>0$ such that for any function $g$ on the phase space
		\begin{equation*}
			\Nrm{\weight{\opp}\(\widetilde{\op}_{g}^2 - \widetilde{\op}_{g^2}\) \weight{\opp}}{\L^2} \leq C\,\hbar \(\Nrm{g}{W^{1,4}_1\cap H^3}^2 + \Nrm{g^2}{H^1_1\cap H^2}\).
		\end{equation*}
	\end{lem}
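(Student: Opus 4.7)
The plan is to commute the weights $\weight{\opp}$ past the Wick quantizations inside $\widetilde{\op}_g^2-\widetilde{\op}_{g^2}$, so as to reduce the estimate to Lemma~\ref{lem:commut_square_wick} applied to the weighted function $g_1:=\weight{\xi}\,g$, crucially using the pointwise identity $g_1^2=\weight{\xi}^2 g^2$.

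First I would rewrite $\widetilde{\op}_g=\op_{g_h\ast g}$ and $\widetilde{\op}_{g^2}=\op_{g_h\ast g^2}$, then apply Lemma~\ref{lem:Weyl_and_weight} to trade $\weight{\opp}$ for a multiplication by $\weight{\xi}$ on the Weyl symbol, and Lemma~\ref{lem:p-weight_remainder_gaussian} to commute that multiplication past the convolution by $g_h$. This produces
\begin{equation*}
\weight{\opp}\widetilde{\op}_g=\widetilde{\op}_{g_1}+E^L,\qquad \widetilde{\op}_g\weight{\opp}=\widetilde{\op}_{g_1}+E^R,
\end{equation*}
with $\Nrm{E^L}{\L^2}+\Nrm{E^R}{\L^2}\leq C\hbar\,\Nrm{g}{H^1}$. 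The same strategy applied with two weights to $\widetilde{\op}_{g^2}$, combining estimate~\eqref{eq:p-weight_remainder_2} (which contributes $\hbar^2\,\Nrm{g^2}{H^2}$) with the $\weight{\xi}^2$-Gaussian commutator bound~\eqref{eq:p-weight_remainder_gaussian_2} (contributing $\hbar\,\Nrm{g^2}{H^1_1}$), gives
\begin{equation*}
\weight{\opp}\widetilde{\op}_{g^2}\weight{\opp}=\widetilde{\op}_{g_1^2}+E_2,\qquad \Nrm{E_2}{\L^2}\leq C\hbar\,\Nrm{g^2}{H^1_1\cap H^2}.
\end{equation*}

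Expanding the product then yields the algebraic decomposition
\begin{equation*}
\weight{\opp}(\widetilde{\op}_g^2-\widetilde{\op}_{g^2})\weight{\opp}=\bigl(\widetilde{\op}_{g_1}^2-\widetilde{\op}_{g_1^2}\bigr)+E^L\widetilde{\op}_{g_1}+\widetilde{\op}_{g_1}E^R+E^L E^R-E_2.
\end{equation*}
The leading term is handled directly by Lemma~\ref{lem:commut_square_wick} with $p=2$:
\begin{equation*}
\Nrm{\widetilde{\op}_{g_1}^2-\widetilde{\op}_{g_1^2}}{\L^2}\leq 48\,\hbar\,\Nrm{\nabla g_1}{L^4}^2\leq C\hbar\,\Nrm{g}{W^{1,4}_1}^2,
\end{equation*}
which is the source of the $\Nrm{g}{W^{1,4}_1}^2$-term in the stated bound, while $E_2$ provides the $\Nrm{g^2}{H^1_1\cap H^2}$-term. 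The three remaining cross-terms are controlled via Schatten--H\"older, $\Nrm{AB}{\L^2}\leq\Nrm{A}{\L^2}\Nrm{B}{\L^\infty}$, combined with the Wick bound $\Nrm{\widetilde{\op}_{g_1}}{\L^\infty}\leq\Nrm{g_1}{L^\infty}$ from Lemma~\ref{lem:properties_of_Wick}; it then remains to estimate $\Nrm{\weight{\xi}g}{L^\infty}$ by a phase-space Sobolev embedding/interpolation of the intersection $W^{1,4}_1\cap H^3$, producing the $\Nrm{g}{H^3}$-factor.

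The main obstacle is closing these cross terms: keeping an $O(\hbar)$ rate requires an operator-norm estimate on $\widetilde{\op}_{g_1}$, hence an $L^\infty$ control of $\weight{\xi}\,g$, and since in dimension six neither $W^{1,4}_1$ nor $H^3$ alone embeds into $L^\infty_1$, the combined $W^{1,4}_1\cap H^3$-regularity of $g$ is precisely what is needed to obtain it, explaining the presence of $\Nrm{g}{H^3}$ alongside $\Nrm{g}{W^{1,4}_1}^2$ on the right-hand side.
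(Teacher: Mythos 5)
Your decomposition is essentially the same as the paper's: commute the weights through the Wick quantizations using Lemmas~\ref{lem:Weyl_and_weight} and~\ref{lem:p-weight_remainder_gaussian}, reduce the leading term to Lemma~\ref{lem:commut_square_wick} applied to $g_1=\weight{\xi}g$ using $g_1^2=\weight{\xi}^2g^2$, and treat the remainders as cross terms. But there is a genuine gap in how you close the cross terms.

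You propose to bound the cross terms by Schatten--H\"older in the form $\Nrm{AB}{\L^2}\le\Nrm{A}{\L^2}\Nrm{B}{\L^\infty}$ together with $\Nrm{\widetilde{\op}_{g_1}}{\L^\infty}\le\Nrm{g_1}{L^\infty}$, and you then assert that $W^{1,4}_1\cap H^3$ in six phase-space dimensions controls $\Nrm{\weight{\xi}g}{L^\infty}$. That embedding is false. In $\R^6$, $H^3$ is exactly the critical Sobolev exponent ($s=d/2$), hence $H^3\not\hookrightarrow L^\infty$, and $W^{1,4}(\R^6)\hookrightarrow L^{12}$ only. Interpolating $W^{1,4}$ against $W^{3,2}$ gives $W^{1+2\theta,p_\theta}$ with $\frac{1}{p_\theta}=\frac{1-\theta}{4}+\frac{\theta}{2}$; the Morrey condition $(1+2\theta)\,p_\theta>6$ would require $\theta>1$, so the intersection never reaches $L^\infty$ either. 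Adding the weight only makes the requirement stronger. So the hypothesis of the lemma does not supply the $L^\infty$ control your argument needs, and the chain breaks precisely at the step you yourself flag as ``the main obstacle''.

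The paper closes this step differently, with $\L^4\cdot\L^4\to\L^2$ H\"older rather than $\L^\infty\cdot\L^2$: it bounds $\Nrm{\widetilde{\op}_{m g}\,\op_{r_1}}{\L^2}\le\Nrm{\widetilde{\op}_{m g}}{\L^4}\Nrm{\op_{r_1}}{\L^4}$ and uses $\Nrm{\widetilde{\op}_{m g}}{\L^4}\le\Nrm{m g}{L^4}$, which is exactly what $g\in W^{1,4}_1$ provides (no $L^\infty$ needed), together with $\Nrm{\op_{r_1}}{\L^4}\le\Nrm{r_1}{L^4}+h^{1/4}\Nrm{\nabla^2 r_1}{L^2}\le C\hbar\Nrm{g}{H^3}$, where the Schatten-$4$ bound on the Weyl--Wick discrepancy is obtained from the Hilbert--Schmidt bound via $\Nrm{\cdot}{\L^4}\le h^{-3/4}\Nrm{\cdot}{\L^2}$ followed by Inequality~\eqref{eq:Wick_Weyl_error}. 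Relatedly, the $\L^2$ bounds you state for $E^L,E^R$ are not sufficient even for the $\L^4\cdot\L^4$ route: one needs the error operators in $\L^4$, which requires the extra interpolation step just described. Replacing your $\L^\infty$--$\L^2$ pairing by the paper's $\L^4$--$\L^4$ pairing and upgrading the error bounds to $\L^4$ would repair the proof.
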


	\begin{proof}
		Let $\opm := \weight{\opp}$ and $m(\xi) = \weight{\xi}$. By the definition of the absolute value for operators, by the proof of Lemma~\ref{lem:Weyl_and_weight} and by Lemma~\ref{lem:p-weight_remainder_gaussian}, we can write
		\begin{equation*}
			\opm\,\widetilde{\op}_{g}^2\,\opm = \n{\widetilde{\op}_{g} \opm}^2 = \n{\op_{\tilde{g}} \opm}^2 = \n{\op_{m\,\tilde{g}} + \op_r}^2 = \n{\widetilde{\op}_{m\,g} + \op_{r_1}}^2
		\end{equation*}
		where $r_1 = \com{m,\sim}g+r$, and 
		\begin{equation*}
			\opm\,\widetilde{\op}_{g^2}\,\opm = \op_{m^2\widetilde{g^2}} + \op_{r_2} = \widetilde{\op}_{m^2\,g^2} + \op_{r_0}
		\end{equation*}
		where $r_0 = \com{m^2,\sim}g^2+r_2$. It follows that 
		\begin{multline}\label{est:difference_toeplitz_square}
			\Nrm{\opm\Big(\widetilde{\op}_{g}^2 - \widetilde{\op}_{g^2}\Big)\opm}{\L^2} = \Nrm{\n{\widetilde{\op}_{m\,g} +\op_{r_1}}^2 - \widetilde{\op}_{m^2\,g^2} - \op_{r_0}}{\L^2}
			\\
			\leq \Nrm{\widetilde{\op}_{m\,g}^2 - \widetilde{\op}_{m^2\,g^2}}{\L^2} + 2 \Nrm{\widetilde{\op}_{m\,g}\, \op_{r_1}}{\L^2}  + \Nrm{\op_{r_1}}{\L^4}^2  + \Nrm{\op_{r_0}}{\L^2}.
		\end{multline}
		The first term on the right-hand side of Inequality \eqref{est:difference_toeplitz_square} is bounded by Lemma~\ref{lem:commut_square_wick} with $p=2$, leading to
		\begin{equation*}
			\Nrm{\widetilde{\op}_{m\,g}^2 - \widetilde{\op}_{m^2\,g^2}}{\L^2} \leq 48\,\hbar \Nrm{\nabla(m\,g)}{L^4}^2.
		\end{equation*}
		The second term can be bounded by H\"older's inequality and Inequality~\eqref{eq:Wick_Schatten} as follows
		\begin{equation*}
			\Nrm{\widetilde{\op}_{m\,g} \, \op_{r_1}}{\L^2} \leq \Nrm{\widetilde{\op}_{m\,g}}{\L^4} \Nrm{\op_{r_1}}{\L^4} \leq  C\,\hbar \Nrm{m\,g}{L^4} \Nrm{g}{H^3}.
		\end{equation*}
		The third term can be bounded by
		\begin{equation*}
			\Nrm{\op_{r_1}}{\L^4} \leq \Nrm{\op_{r_1} - \widetilde{\op}_{r_1}}{\L^4} +\Nrm{\widetilde{\op}_{r_1}}{\L^4}
		\end{equation*}
		and then noticing that the Schatten-$4$ norm is controlled by the Hilbert--Schmidt norm, one obtains $\Nrm{\op_{r_1} - \widetilde{\op}_{r_1}}{\L^4} \leq h^{-\frac34} \Nrm{\op_{r_1} - \widetilde{\op}_{r_1}}{\L^2} \leq h^{\frac14} \Nrm{\nabla^2 r_1}{L^2}$, where we used Inequality~\eqref{eq:Wick_Weyl_error} in the last inequality. Hence we deduce that
		\begin{equation*}
			\Nrm{\op_{r_1}}{\L^4} \leq \Nrm{r_1}{L^4} + h^{\frac14} \Nrm{\nabla^2 r_1}{L^2} \leq C\,\hbar \Nrm{g}{H^3}.
		\end{equation*}
		Finally, the last term  can be bounded by
		\begin{align*}
			\Nrm{\op_{r_0}}{\L^2} \le C\, \hbar \(\Nrm{g^2}{H^1_1} + \hbar \Nrm{\Delta_x g^2}{L^2}\).
		\end{align*}
		Combining all the inequalities leads to the desired result.
	\end{proof}

	\begin{lem}\label{lemma:diag-opt-opf}
		Let $\op_f$ be a solution to Equation~\eqref{eq:Weyl-Vlasov} and $\widetilde{\op}$ be a solution to Equation~\eqref{eq:linear-Hartree}. Then 
		\begin{equation*}
			\Nrm{\diag(\widetilde{\op}-\op_f)}{L^2_x}\leq C\, \hbar \(C^\init+ \Nrm{\rho_f}{W^{1,\infty}_x} \Nrm{\op_f}{\cW^{2,2}(\weight{\opp}^2)}\),
		\end{equation*}
		where $C^\init$ is given by Equation~\eqref{eq:C_init}.
	\end{lem}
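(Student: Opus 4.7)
My plan is to reduce the $L^2_x$-bound on $\diag(\widetilde{\op}-\op_f)$ to a weighted Schatten-$2$ estimate on the self-adjoint difference $\delta := \widetilde{\op} - \op_f$, and then to obtain the latter from a Gr\"onwall-type energy identity. Subtracting~\eqref{eq:Weyl-Vlasov} from~\eqref{eq:linear-Hartree}, the operator $\delta$ solves
\begin{equation*}
    i\hbar\,\dpt \delta = \com{H_f,\delta} + B_f(\op_f), \quad \delta(0) = \widetilde{\op}_{\sqrt{f^\init}}^2 - \op_{f^\init}.
\end{equation*}

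For the reduction, since $\diag(\op_g)=\rho_g$ for any Weyl quantization, the Cauchy--Schwarz inequality in $\xi$ with the integrable weight $\weight{\xi}^{-2}$ combined with Lemma~\ref{lem:Weyl_and_weight} yields, for any self-adjoint $\op$,
\begin{equation*}
    \Nrm{\diag(\op)}{L^2_x} \leq C\,\bNrm{\weight{\opp}\,\op\,\weight{\opp}}{\L^2} + C\,\hbar^2\,\bNrm{\Delta_x f_{\op}}{L^2}.
\end{equation*}
Setting $A := \weight{\opp}\,\delta\,\weight{\opp}$, it then suffices to bound $\Nrm{A}{\L^2}$ by $C\,\hbar$ times the claimed right-hand side, the $\hbar^2$ error being absorbed.

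Next, I differentiate $\Nrm{A}{\L^2}^2$ in time. After expanding $\weight{\opp}\com{H_f,\delta}\weight{\opp}=\com{H_f,A}$ plus correction terms involving $\com{V_f,\weight{\opp}}$, the $\com{H_f,A}$ contribution vanishes by cyclicity since $A$ is self-adjoint. Semiclassical pseudodifferential calculus bounds $\Nrm{\com{\weight{\opp},V_f}}{\infty}\leq C\,\hbar\,\Nrm{\nabla V_f}{L^\infty_x}\leq C\,\hbar\,\Nrm{\rho_f}{W^{1,\infty}_x}$, and combined with $\Nrm{\delta\,\weight{\opp}}{\L^2}=\Nrm{\weight{\opp}^{-1}A}{\L^2}\leq\Nrm{A}{\L^2}$ this yields a Gr\"onwall-type inequality on $\Nrm{A}{\L^2}$ with growth rate proportional to $\Nrm{\rho_f}{W^{1,\infty}_x}$ and source term $\hbar^{-1}\bNrm{\weight{\opp}\,B_f(\op_f)\,\weight{\opp}}{\L^2}$. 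The main technical step is the weighted analogue of~\eqref{eq:B_bound},
\begin{equation*}
    \hbar^{-1}\bNrm{\weight{\opp}\,B_f(\op_f)\,\weight{\opp}}{\L^2} \leq C\,\hbar\,\Nrm{\rho_f}{W^{1,\infty}_x}\,\Nrm{\op_f}{\cW^{2,2}(\weight{\opp}^2)},
\end{equation*}
obtained by adapting~\cite[Proposition~4.4]{lafleche_strong_2021}: the kernel of $B_f(\op_f)$ is the second-order Taylor remainder of $V_f$ around $\tfrac{x+y}{2}$ times $\op_f(x,y)$; the factors $(x-y)^\alpha$ convert to $\xi$-derivatives of $f$ via the semiclassical Fourier transform, and the weights $\weight{\opp}^2$ are absorbed into the kinetic Sobolev norm of $\op_f$.

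For the initial datum I split $\delta(0) = (\widetilde{\op}_{\sqrt{f^\init}}^2-\widetilde{\op}_{f^\init}) + (\widetilde{\op}_{f^\init}-\op_{f^\init})$: the first summand is bounded in weighted $\L^2$ directly by Lemma~\ref{lem:init_diff_diag}, giving the $C^\init\,\hbar$ contribution, while the second is bounded via~\eqref{eq:Wick_Weyl_error} combined with Lemma~\ref{lem:p-weight_remainder_gaussian} to commute $\weight{\opp}$ with the Wick--Weyl convolution, producing additional terms controlled by $\Nrm{f^\init}{H^2}$ and thus by $C^\init$. Applying Gr\"onwall's lemma then delivers the stated inequality, with the time integrations absorbed into the constant $C$ via the uniform-in-$\hbar$ regularity bounds of Proposition~\ref{prop:propagation_regularity}. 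The main obstacle is the weighted estimate on $B_f(\op_f)$: handling the weight $\weight{\opp}^2$ in the Taylor remainder requires careful tracking of both kinetic regularity of $\op_f$ and $L^\infty$ control of $\nabla^2 V_f$ in terms of $\Nrm{\rho_f}{W^{1,\infty}_x}$.
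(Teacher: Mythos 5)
Your proposal follows essentially the same route as the paper: reduce $\Nrm{\diag(\widetilde{\op}-\op_f)}{L^2_x}$ to the weighted Hilbert--Schmidt norm $\bNrm{\weight{\opp}\,(\widetilde{\op}-\op_f)\,\weight{\opp}}{\L^2}$, propagate via the linear equation with source $B_f(\op_f)$, control the initial datum via Lemma~\ref{lem:init_diff_diag}, and show the weighted $B_f(\op_f)$ source is $O(\hbar^2)$ using its second-order Taylor structure. Two minor differences are worth flagging. First, the paper simply cites \cite[Proposition~6.4]{chong_manybody_2021} for the reduction and then works with the one-sided weight $B_f(\op_f)\weight{\opp}^2$, computing $J_1+J_2+J_3$ explicitly from the kernel, whereas you derive the reduction from Cauchy--Schwarz and Lemma~\ref{lem:Weyl_and_weight} (leaving an $O(\hbar^2\Nrm{\Delta_x f_\delta}{L^2})$ remainder, which should be justified as bounded via propagation of regularity for both $\widetilde{\op}$ and $\op_f$ in $\cW^{2,2}$) and use the two-sided weight $\weight{\opp}B_f(\op_f)\weight{\opp}$ in a Gr\"onwall energy identity; both routes work. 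Second — and here you are more careful than the paper's write-up — Lemma~\ref{lem:init_diff_diag} bounds $\weight{\opp}(\widetilde{\op}_{\sqrt{f^\init}}^2 - \widetilde{\op}_{f^\init})\weight{\opp}$, while the term actually needed is $\weight{\opp}(\widetilde{\op}_{\sqrt{f^\init}}^2 - \op_{f^\init})\weight{\opp}$; your explicit split $\widetilde{\op}_{f^\init}-\op_{f^\init}$ handled via \eqref{eq:Wick_Weyl_error} together with Lemma~\ref{lem:p-weight_remainder_gaussian} is exactly the missing step that accounts for the $\Nrm{f^\init}{H^1_1\cap H^2}$ contribution to $C^\init$.
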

	
	\begin{proof}
		Let $\opm = \weight{\opp}$. By the proof of \cite[Proposition~6.4]{chong_manybody_2021} and the equation verified by $\widetilde{\op}-\op_f$, we obtain
		\begin{align*}
			\Nrm{\diag(\widetilde{\op}-\op_f)}{L^2_x} &\leq C\Nrm{\opm\,(\widetilde{\op}-\op_f)\,\opm}{\L^2}
			\\
			&\leq C\Nrm{\opm\,(\widetilde{\op}^\init-\op_f^\init)\,\opm}{\L^2}+\frac{C}{\hbar}\int_0^t \Nrm{B_f(\op_f)\,\opm^2}{\L^2} \d s.
		\end{align*}
		With our choice of the initial datum $\widetilde{\op}^\init = \widetilde{\op}^2_{\sqrt{f^\init}}$, the first term on the right-hand side is bounded in Lemma~\ref{lem:init_diff_diag}. As for the second term, we compute explicitly the $L^2$ norm of the kernel of the operator $B_f(\op_f)\,\opm^2$. Recall the definition of $B_f(\op)$ given by Formula~\eqref{def:B_operator} and define
		\begin{equation*}
			\delta^2 V(x,y) := V_{f}(x) - V_{f}(y) - \(x-y\)\cdot\nabla V_{f}\Big(\frac{x+y}{2}\Big)
		\end{equation*}
		so that the integral kernel of $B$ can be written $B_f(\op)(x,y) = \delta^2 V(x,y)\,\op(x,y)$. Then by the chain rule, the integral kernel of the operator $B_f(\op_f)\n{\opp}^2$ is given by
		\begin{align*}
			\big(B_f(\op_f)\n{\opp}^2\big)(x,y)
			&= -\hbar^2\, \delta^2V(x,y)\, \lapl_y\op_f(x,y) -\hbar^2 \lapl_y\!\(\delta^2V(x,y)\) \op_f(x,y)
			\\
			&\qquad- 2\,\hbar^2\,  \grad_y\delta^2V(x, y)\cdot \grad_y\op_f(x,y) =: J_1+J_2+J_3.
		\end{align*}
		The $J_1$ term is simply $B_f(\op_f\n{\opp}^2)$ which can be bounded in $\L^2$ by Inequality~\eqref{eq:B_bound}, leading to
		\begin{align*}
			\Nrm{B_f(\op_f\n{\opp}^2)}{\L^2} \le C \, \hbar^2 \Nrm{\rho_f}{W^{1,\infty}_x} \Nrm{\Dhv^2\!\(\op_f\n{\opp}^2\)}{\L^2}.
		\end{align*}
		For the $J_2$ term, since $V$ is the solution of the Poisson equation, we have that 
		\begin{align*}
			-\hbar^2\,\lapl_y \delta^2V(x, y) = \pm\hbar^2 \Big(
				\rho_{f}\Big(\frac{x+y}{2}\Big)-\rho_{f}(y)\Big) - \tfrac{\pm \hbar^2}{4}\, \grad \rho_f\Big(\frac{x+y}{2}\Big)\cdot(x-y)
		\end{align*}
		which means
		\begin{align*}
			J_2(x,y) = \pm\hbar^2 \Big(\rho_{f}\Big(\frac{x+y}{2}\Big)-\rho_{f}(y)\Big)\op_{f}(x, y)
			\pm\frac{\hbar^3}{4i} \grad \rho_f\Big(\frac{x+y}{2}\Big)\cdot \Dhv \op_{f}(x,y).
		\end{align*}
		Then we deduce that
		\begin{equation*}
			\Nrm{J_2}{\L^2} \le 2\,\hbar^2 \Nrm{\rho_f}{L^\infty_x}\Nrm{\op_f}{\L^2}+\tfrac{\hbar^3}{4} \Nrm{\grad \rho_f}{L^\infty_x}\Nrm{\Dhv\op_f}{\L^2}.
		\end{equation*}
		Lastly, for the $J_3$ term, observe that 
		\begin{align*}
			-2\,\hbar^2\, \nabla_y \delta^2 V(x, y)= 2\,\hbar^2\(E_f(\tfrac{x+y}{2})-E_f(y)\)+\hbar^2 \(x-y\)\cdot \grad E_f\(\tfrac{x+y}{2}\)
		\end{align*}
		which means 
		\begin{multline*}
			J_3(x,y) = i\hbar^3 \int^1_0 \nabla E_f(y+t\,\tfrac{x-y}{2}) : \nabla_y\Dhv\op_f(x,y) \dd t
			\\
			+ i\hbar^3 \grad E_f\(\tfrac{x+y}{2}\) : \nabla_y\Dhv\op_f(x,y).
		\end{multline*}
		Hence it follows that 
		\begin{align*}
			\Nrm{J_3}{\L^2} \le C\,\hbar^2 \Nrm{\grad E_f}{L^\infty_x}\Nrm{\Dhv\op_f\, \opp}{\L^2}.
		\end{align*}
		This completes our proof of the lemma. 
	\end{proof}
	
	\begin{prop}\label{prop:sqrt-op-vs-opt}
		Let $\op$ be a solution to the Hartree equation \eqref{eq:Hartree} with initial datum $\op^\init$ and $\widetilde{\op}$ be a solution to \eqref{eq:linear-Hartree} with initial datum $\widetilde{\op}^\init$. We denote by $v_1 := \sqrt{\op}$, $\tilde{v} := \sqrt{\widetilde{\op}}$. Then there exists $\lambda(t)\in C^0(\R_+,\R_+)$ such that 
		\begin{equation*}
		\Nrm{v_1-\tilde{v}}{\L^2}^2\leq \Nrm{v_1^\init - \tilde{v}^\init}{\L^2}^2 e^{2\,\Lambda(t)} + \hbar^2 \int_0^t c(s)^2 \,e^{2\(\Lambda(t)-\Lambda(s)\)}\dd s
		\end{equation*}
		where $\Lambda(t) = C \int_0^t \lambda(s)\dd s$ with $\lambda$ given by~\eqref{eq:lambda}
		and $c(t)$ is given by
		\begin{equation*}
			c(t) = C\, \Nrm{\Dhv \tilde{v}}{\cW^{1,2}} \(C^\init + \Nrm{\rho_f}{W^{1,\infty}_x} \Nrm{\op_f}{\cW^{2,2}(\weight{\opp}^2)}\)
		\end{equation*}
		with $C^\init$ given in Equation~\eqref{eq:C_init}.
	\end{prop}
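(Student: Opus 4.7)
The plan is to adapt the $L^2$ stability strategy of Theorem~\ref{thm:Q-uniqueness} to the present situation, where one of the two operators, $\widetilde{\op}$, is driven by the Vlasov mean-field $V_f$ rather than by its own Hartree mean-field. Since $\op$ and $\widetilde{\op}$ are positive and evolve by the unitary conjugations generated respectively by $H_\op$ and $H_f$, their square roots $v_1$ and $\tilde v$ satisfy the same commutator equations, and hence the difference $v := v_1 - \tilde v$ verifies
\begin{equation*}
	i\hbar\,\dpt v = \com{H_\op, v} + \com{V_\op - V_f,\,\tilde v}.
\end{equation*}
I would differentiate $\Nrm{v}{\L^2}^2 = h^3 \Tr{v^2}$: the Hamiltonian piece vanishes by cyclicity of the trace, and the remainder is split using the decomposition $V_\op - V_f = V_{\op - \widetilde{\op}} + V_{\widetilde{\op} - \op_f}$.

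The contribution of $V_{\op - \widetilde{\op}}$ is treated exactly as in Section~\ref{sect:Qcase}. Writing $\op - \widetilde{\op} = v\tilde v + \tilde v v + v^2$, the $v^2$ part is controlled by $C\,\cC_\infty^{1/2} \Nrm{\Dhv \tilde v}{\L^{3\pm\eps}(\weight{\opp}^n)} \Nrm{v}{\L^2}^2$ via the trace-norm commutator bound of \cite[Theorem~4.1]{lafleche_strong_2021}, while the mixed $v\tilde v + \tilde v v$ part is controlled by $C\Nrm{\Dhv \tilde v}{\cW^{1,2}} \Nrm{\rho_{\widetilde{\op}}}{L^\infty_x}^{1/2} \Nrm{v}{\L^2}^2$ via Lemma~\ref{lem:commutator_estimate_HS} combined with the identity $h^3 \Nrm{\tilde v(x,\cdot)}{L^2_y}^2 = \rho_{\widetilde{\op}}(x)$. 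Since Lemma~\ref{lemma:diag-opt-opf} ensures that $\rho_{\widetilde{\op}}$ and $\rho_f$ differ by an $O(\hbar)$ correction in $L^\infty_x$ (absorbed into the forcing), this piece recovers exactly the linearized rate $2\Lambda'(t)\Nrm{v}{\L^2}^2$ with $\Lambda'(t) = C\lambda(t)$ as in~\eqref{eq:lambda}. The genuinely new piece $V_{\widetilde{\op}-\op_f}$ is the auxiliary-to-Vlasov error; H\"older's inequality followed by Lemma~\ref{lem:commutator_estimate_HS} applied with $\op$ replaced by $\widetilde{\op}-\op_f$ gives
\begin{equation*}
	\tfrac{h^3}{\hbar}\,\n{\Tr{v \com{V_{\widetilde{\op}-\op_f},\,\tilde v}}} \leq C\,\Nrm{\diag(\widetilde{\op}-\op_f)}{L^2_x}\,\Nrm{\Dhv \tilde v}{\cW^{1,2}}\,\Nrm{v}{\L^2},
\end{equation*}
and Lemma~\ref{lemma:diag-opt-opf} bounds the diagonal error by $C\,\hbar\,\bigl(C^\init + \Nrm{\rho_f}{W^{1,\infty}_x}\Nrm{\op_f}{\cW^{2,2}(\weight{\opp}^2)}\bigr)$, producing exactly the forcing $\hbar\, c(t) \Nrm{v}{\L^2}$ from the statement.

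Combining the two contributions yields the Bernoulli-type differential inequality
\begin{equation*}
	\dt \Nrm{v}{\L^2}^2 \leq 2\,\Lambda'(t) \Nrm{v}{\L^2}^2 + 2\,\hbar\, c(t) \Nrm{v}{\L^2},
\end{equation*}
from which the claim follows by the standard substitution $\psi = \Nrm{v}{\L^2}\,e^{-\Lambda}$, reducing to $\psi' \leq \hbar c(t) e^{-\Lambda(t)}$ and then Gr\"onwall (or equivalently a variation-of-constants formula) followed by squaring. The main obstacle is not any single estimate but identifying the correct semiclassical object controlling the auxiliary error: the $L^2_x$ norm of $\diag(\widetilde{\op}-\op_f)$. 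Verifying that this norm is of size $O(\hbar)$ is exactly the content of Lemma~\ref{lemma:diag-opt-opf}, and its proof in turn relies on the delicate initial-data identity of Lemma~\ref{lem:init_diff_diag} (which is the reason for choosing $\widetilde{\op}^\init = \widetilde{\op}^2_{\sqrt{f^\init}}$ rather than $\op_{f^\init}$ itself) together with the standard $B_f(\op_f)$-estimate of \cite{lafleche_strong_2021}.
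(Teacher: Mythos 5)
Your proposal follows essentially the same route as the paper: differentiate $\Nrm{v}{\L^2}^2$ with $v=v_1-\tilde v$, split $V_{\op}-V_{\op_f}=V_{\op-\widetilde{\op}}+V_{\widetilde{\op}-\op_f}$, expand $\op-\widetilde{\op}=v^2+v\tilde v+\tilde v v$ and reuse the $I_1,I_2$ bounds from Theorem~\ref{thm:Q-uniqueness}, and control the auxiliary-to-Vlasov piece via Lemma~\ref{lem:commutator_estimate_HS} together with Lemma~\ref{lemma:diag-opt-opf}. The only cosmetic deviation is the final ODE step: you keep the forcing linear in $\Nrm{v}{\L^2}$ and integrate a Bernoulli-type inequality $\psi'\leq\hbar c\,e^{-\Lambda}$ for $\psi=\Nrm{v}{\L^2}e^{-\Lambda}$, whereas the paper applies Young's inequality to the $J_2$ term to obtain $\frac{\d}{\d t}\Nrm{v}{\L^2}^2\leq 2\Lambda'\Nrm{v}{\L^2}^2+\hbar^2 c^2$ and then uses linear Gr\"onwall, which produces the $\int_0^t c(s)^2 e^{2(\Lambda(t)-\Lambda(s))}\dd s$ form of the statement directly (your version, after squaring, gives $\bigl(\int_0^t c\,e^{\Lambda(t)-\Lambda(\cdot)}\bigr)^2$, which is equivalent up to a Cauchy--Schwarz factor of $t$). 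One small inaccuracy in your justification: Lemma~\ref{lemma:diag-opt-opf} controls $\diag(\widetilde{\op}-\op_f)$ in $L^2_x$, not $L^\infty_x$, so it does not directly license replacing $\rho_{\widetilde{\op}}$ by $\rho_f$ in the factor $\Nrm{\rho_{\widetilde{\op}}}{L^\infty_x}^{1/2}$ from the mixed term; the paper silently makes the same substitution, so this is a shared loose end rather than a flaw specific to your argument.
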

		
	\begin{proof}
		Define $v:=v_1-\tilde{v}$. Observe that $v_1$ solves
		\begin{equation*}
			i\hbar\,\dpt v_1=\com{H_1,v_1},\quad\quad H_1:=H_{v_1^2}
		\end{equation*}
		and $\tilde{v}$ solves
		\begin{equation*}
			i\hbar\,\dpt \tilde{v}=\com{H_f,\tilde{v}}.
		\end{equation*}
		Now we proceed by mimicking the proof of Theorem~\ref{thm:Q-uniqueness}. By direct computation, we have that
		\begin{equation*}
			i\hbar\,\dpt v = \com{H_1,v}+\bcom{V_{\op}-V_{\op_f},\tilde{v}}
		\end{equation*}
		which implies 
		\begin{equation}\label{eq:2norm-v}
			\frac{1}{4\pi}\,\dt\Nrm{v}{\L^2}^2 = h^2 \Tr{\bcom{V_{v_1^2}-V_{\op_f},\tilde{v}}v}.
		\end{equation}
		Now adding and subtracting $h^2 \Tr{\com{V_{\tilde{\op}},\tilde{v}}v}$ to the right-hand side of Equation~\eqref{eq:2norm-v} yields
		\begin{equation}\label{eq:L2-estimate}
			\begin{split}
			\frac{1}{4\pi}\,\frac{\d}{\d t}\Nrm{v}{\L^2}^2 &= h^2 \Tr{\com{V_{\op}-V_{\tilde{\op}},\tilde{v}}v} + h^2 \Tr{\bcom{V_{\tilde{\op}}-V_{\op_f},\tilde{v}}v}
			\\
			&= h^2 \Tr{\bcom{V_{v_1^2-\tilde{v}^2},\tilde{v}}v} + h^2 \Tr{\bcom{V_{\tilde{\op}-\op_f},\tilde{v}}v}
			=: J_1+J_2.
			\end{split}
		\end{equation}
		We bound $J_1$ in the same manner as in Theorem~\ref{thm:Q-uniqueness} and get that 
		\begin{equation*}
			J_1 \leq h^2 \Tr{\com{V_{v^2},\tilde{v}}v} + 2\, h^2 \Tr{\com{V_{\tilde{v}v},\tilde{v}}v} =: J_{1,1}+2\,J_{1,2},
		\end{equation*}
		where the two terms are readily bounded as follows
		\begin{align*}
			J_{1,1} &\leq C\Nrm{\Dhv\tilde{v}\,\opm}{\L^{3\pm\eps}}\Nrm{v}{\L^\infty}\Nrm{v}{\L^2}^2,
			\\
			J_{1,2} &\leq C\Nrm{\com{V_{v\tilde{v}},\tilde{v}}}{\L^2}\Nrm{v}{\L^2}.
		\end{align*}
		The right-hand side of $J_{1,1}$ is controlled by $\Nrm{v}{\L^2}^2$ since $\Nrm{\Dhv \tilde{v}\,\opm}{\L^{3\pm\eps}}$ is bounded thanks to Proposition~\ref{prop:propagation_regularity} and $\Nrm{v}{\L^\infty} \leq \Nrm{v_1}{\L^\infty} + \Nrm{\tilde{v}}{\L^\infty}$, which are both bounded. By Lemma~\ref{lem:commutator_estimate_HS}, similarly as in the proof of Theorem~\ref{thm:Q-uniqueness}, we get
		\begin{equation*}
			J_{1,2} \leq C \Nrm{\Dhv \tilde{v}}{\cW^{1,2}} \Nrm{\rho_f}{L^\infty_x}^{\frac12} \Nrm{v}{\L^2}^2
		\end{equation*}
		where $\rho_f\in L^\infty(\R^3)$ and $\Nrm{\Dhv \tilde{v}}{\cW^{1,2}}$  is bounded in time thanks to Proposition~\ref{prop:propagation_regularity}. We are now left with the bound on $J_2$. By H\"older's inequality for Schatten norms and Young's inequality for products we obtain
		\begin{equation*}
			J_2 = h^2 \Tr{\bcom{V_{\tilde{\op}-\op_f},\tilde{v}}v} \leq \bNrm{\tfrac{1}{h}\bcom{V_{\tilde{\op}-\op_f},\tilde{v}}}{\L^2}^2+ \Nrm{v}{\L^2}^2.
		\end{equation*}
		We aim at showing that $\tfrac{1}{h}\bNrm{\bcom{V_{\tilde{\op}-\op_f},\tilde{v}}}{\L^2}$ is small. To this end, apply Lemma~\ref{lem:commutator_estimate_HS} to get
		\begin{equation}\label{eq:V-opt-opf}
			\tfrac{1}{h}\bNrm{\bcom{V_{\tilde{\op}-\op_f},\tilde{v}}}{\L^2} \leq C \Nrm{\Dhv \tilde{v}}{\cW^{1,2}} \Nrm{\diag(\widetilde{\op}-\op_f)}{L^2_x}.
		\end{equation}
		The term $\Nrm{\diag(\widetilde{\op}-\op_f)}{L^2_x}$ in Equation~\eqref{eq:V-opt-opf} is controlled in Lemma~\ref{lemma:diag-opt-opf}, leading to the statement of the proposition.
	\end{proof}
	
\subsection{Proof of the main theorems}
	 
	Now that we have a bound on $\sqrt{\op}-\sqrt{\widetilde{\op}}$ in $\L^2$ by the above proposition, we are ready to finish the proof of the semiclassical limit.
	\begin{proof}[Proof of Theorem~\ref{thm:main} and Theorem~\ref{thm:main_2}]
		As explained in the beginning of the section, by Lemma~\ref{lem:commut_square_wick} with $p=2$, we get that
		\begin{align}\label{eq:initial-state-f}
			\Nrm{\widetilde{\op}_{f^\init} - \widetilde{\op}^\init}{\L^2} \leq 48\,\hbar \Nrm{\nabla \sqrt{f^\init}}{L^4}^2,
		\end{align}
		and so, thanks to Inequality~\eqref{eq:optilde-opf} and the fact that $\Nrm{\nabla E_f}{L^\infty_x}$ can be controlled by $\Nrm{\rho_f}{W^{1,\infty}_x}$, the error of positivity is given by
		\begin{equation}\label{eq:bound-rho-tilde-rhof}
			\Nrm{\widetilde{\op}-\op_f}{\L^2} \leq C\,\hbar\( \Nrm{\nabla \sqrt{f^\init}}{L^4}^2 + \int_0^t \Nrm{\rho_f}{W^{1,\infty}_x} \Nrm{\nabla_{\xi}^2 f}{L^2}\).
		\end{equation}
		On the other side, the nonlinear stability error~\eqref{eq:sqrt-op_optilde} is controlled, using Proposition~\ref{prop:sqrt-op-vs-opt} and recalling that $\tilde{\op}^\init = \widetilde{\op}_{\sqrt{f^\init}}^2$, by
		\begin{equation}\label{eq:bound-eq:bound-rho-rho-tilde}
			\Nrm{\op-\widetilde{\op}}{\L^2} \leq \cC_{\infty}^\frac{1}{2}\(\Nrm{\sqrt{\op^\init} - \widetilde{\op}_{\sqrt{f^\init}}}{\L^2}  + \hbar \Nrm{c}{L^2([0,t])}\) e^{\Lambda(t)}.
		\end{equation}
		Summing up \eqref{eq:bound-rho-tilde-rhof} and \eqref{eq:bound-eq:bound-rho-rho-tilde} and using the estimate on the initial state \eqref{eq:initial-state-f} prove Theorem~\ref{thm:main_2}.
		
		Moreover, we can control the difference of the square roots using Inequality~\eqref{eq:Wick_Weyl_error} which tells us that the Wick and Weyl quantization are close since $\sqrt{f^\init}$ is sufficiently regular, and then the fact that the Wigner transform is an isometry from $\L^2$ to $L^2$ to get
		\begin{equation*}
			\Nrm{\sqrt{\op^\init} - \widetilde{\op}_{\sqrt{f^\init}}}{\L^2} \leq \Nrm{\sqrt{f^\init} - f_{\sqrt{\op^\init}}}{L^2} + \tfrac{3\,\hbar}{2} \Nrm{\nabla^2 \sqrt{f^\init}}{L^2}
		\end{equation*}
		which leads to our main inequality~\eqref{eq:Wigner_main_estimate}.
	\end{proof}

	\subsection{Propagation of regularity}
	
	In this section, we prove the following proposition about the semiclassical propagation of regularity for $\tilde{v}$ assuming sufficient regularity on the solution to the Vlasov--Poisson equation. 
	\begin{prop}\label{prop:propagation_regularity}
		Let $h\in(0,1)$, $q\in[1,\infty)$, $n\in\N$, $f$ be a smooth solution of the Vlasov--Poisson equation~\eqref{eq:Vlasov} and $\tilde{v}$ be the solution to the linear equation 
		\begin{equation*}
			i \hbar\,\dpt \tilde{v} = \com{H_f, \tilde{v}},
		\end{equation*}
		where $H_f = -\tfrac{\hbar^2}{2}\lapl + V_f$. Define $\opm = \weight{\opp}^{2n}$. Then for any $\eps\in(0,1)$, there exists $C_{q,n}$ depending only on $q$, $n$ and $\eps$, such that for every $t>0$ it holds
		\begin{equation*}
			\Nrm{\tilde{v}(t)}{\cW^k(\opm)} \le \Nrm{\tilde{v}^\init}{\cW^k(\opm)} \exp\! \(C_{q,n} \int^t_0 \Nrm{\rho_f(s,\cdot)}{W_x^{2n,3\pm\eps}} \d s\).
		\end{equation*}
	\end{prop}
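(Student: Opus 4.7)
The plan is a Duhamel-based energy estimate for the linear evolution $i\hbar\,\dpt \tilde{v} = \com{H_f,\tilde{v}}$. The starting point is the observation that, since $V_f(t,\cdot)$ is real valued, $H_f(t)$ is self-adjoint for almost every $t$, and hence the associated time-ordered propagator $U(t,s)$ is unitary; conjugation by $U$ therefore preserves every Schatten norm $\L^q$ as well as every weighted variant of it built from Fourier multipliers of $\opp$.

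First I would derive the equations satisfied by $\Dh_{\!z}^\alpha \tilde{v}$ for $|\alpha|\le k$. Using that $\Dhx{}$ and $\Dhv{}$ are derivations on commutators together with the two identities $\Dhx{H_f} = \grad V_f$ and $\Dhv{H_f} = \opp$, both obtained by direct computation from $H_f = \tfrac12\n{\opp}^2 + V_f$, I would get an equation of the form
\[
i\hbar\,\dpt\!\(\Dh_{\!z}^\alpha \tilde{v}\) = \com{H_f,\Dh_{\!z}^\alpha \tilde{v}} + R_\alpha,
\]
where $R_\alpha$ is a finite sum of commutators $\com{\grad^{\beta+1}V_f,\Dh_{\!z}^\gamma \tilde{v}}$ with $|\gamma|<|\alpha|$ and $|\beta|\le|\alpha|$, plus terms $\com{\opp,\Dh_{\!z}^\beta \tilde{v}} = -i\hbar\,\Dhx{\Dh_{\!z}^\beta \tilde{v}}$ that merely reshuffle the multi-index. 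Since the weight $\opm=\weight{\opp}^{2n}$ is a function of $\opp$ alone, it commutes with the kinetic part of $H_f$, so bringing $\opm$ inside the commutator produces only the lower-order remainder $\com{\opm,V_f}$. I would handle this remainder by iterating a Taylor expansion in the spirit of Lemma~\ref{lem:commutator_estimate_HS}: each step exchanges one factor of $\opp$ inside $\opm$ for one spatial derivative of $V_f$ times a Fourier multiplier of strictly lower order, and after $2n$ such steps I am left with $\hbar^{2n}\,\grad^{2n}V_f$ multiplied by a zero-order Fourier multiplier.

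Next, for each $|\alpha|\le k$ I would apply Duhamel's formula associated to $U(t,s)$ and use its unitarity to deduce
\[
\Nrm{\Dh_{\!z}^\alpha \tilde{v}(t)}{\L^q(\opm)} \leq \Nrm{\Dh_{\!z}^\alpha \tilde{v}^\init}{\L^q(\opm)} + \frac{1}{\hbar}\int_0^t \Nrm{R_\alpha(s) + [\opm,V_f]\text{-remainders}}{\L^q(\opm)}\dd s.
\]
Each commutator on the right-hand side would then be controlled through the semiclassical commutator bound of \cite[Theorem~4.1]{lafleche_strong_2021}, whose characteristic feature for the Coulomb kernel is precisely the appearance of an $L^{3\pm\eps}$ norm of $\rho_f$ (and of its spatial derivatives). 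Schematically I would use estimates of the form
\[
\tfrac{1}{\hbar}\Nrm{\com{\grad^{j+1} V_f,\op}}{\L^q(\opm)} \leq C\, \Nrm{\rho_f}{W^{j,3\pm\eps}_x}\, \Nrm{\Dhv\op}{\L^q(\opm')}
\]
with $\opm'$ of the same order as $\opm$, so as to close the family of estimates $\{\Nrm{\Dh_{\!z}^\alpha \tilde{v}}{\L^q(\opm)}\}_{|\alpha|\le k}$ against itself modulo the prefactor $\Nrm{\rho_f}{W^{2n,3\pm\eps}_x}$, where the exponent $2n$ comes from the $[\opm,V_f]$ remainders and the $|\alpha|\le k$ spatial derivatives of $V_f$ produced by the $R_\alpha$.

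Summing over $|\alpha|\le k$ yields the scalar integral inequality
\[
\Nrm{\tilde{v}(t)}{\cW^k(\opm)} \leq \Nrm{\tilde{v}^\init}{\cW^k(\opm)} + C_{q,n}\int_0^t \Nrm{\rho_f(s)}{W^{2n,3\pm\eps}_x}\, \Nrm{\tilde{v}(s)}{\cW^k(\opm)}\dd s,
\]
and Gr\"onwall's lemma produces the stated exponential bound. The hard part will be the interaction between the weight $\opm=\weight{\opp}^{2n}$ and the singular potential $V_f$: because $V_f$ and its derivatives lack uniform smoothness, standard Calder\'on--Vaillancourt type pseudo-differential calculus is not directly applicable, so each successive exchange of one factor of $\opp$ in $\opm$ against one spatial derivative of $\rho_f$ must be performed carefully via the Lafleche--Saffirio semiclassical commutator estimate, at the price of the $L^{3\pm\eps}$ norm that shows up in the final bound.
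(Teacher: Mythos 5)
Your strategy matches the paper's: differentiate (or Duhamel-iterate) the weighted $\L^q$ norms of $\Dh_{\!z}^\alpha\tilde{v}$, commute the weight $\opm=\weight{\opp}^{2n}$ through, turn each $\com{V_f,\opp_j^{2n}}$ into sums of $\hbar^\ell\,\partial^\ell E_f$ terms via the identity $\com{V_f,\opp}=i\hbar E_f$, bound those by HLS/Sobolev estimates giving $\Nrm{\partial^\ell\rho_f}{L^{3\pm\eps}}$, and close by Gr\"onwall. The paper does the bookkeeping using the equivalent polynomial weight $\widetilde\opm=1+\sum_j\bp_j^{2n}$ together with Lemma~6.2, Lemma~6.3 and Proposition~6.5 of \cite{chong_manybody_2021}; your ``exchange one $\opp$ in $\opm$ for one spatial derivative of $V_f$'' iteration is the same algebra.

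Two small corrections. First, your opening claim that unitary conjugation by $U(t,s)$ ``preserves every weighted variant'' of the Schatten norms is false (and contradicts your own later remark): $U$ does not commute with $\opm$ because $V_f$ does not, which is precisely why the commutator $\com{\opm,V_f}$ must be estimated. The Duhamel argument survives once this term is tracked, but the blanket invariance statement should be dropped. Second, the cited tool is not quite right: \cite[Theorem~4.1]{lafleche_strong_2021} is the $\L^1$ bound used for $\sup_x \hbar^{-1}\Nrm{\com{K_x,v_2}}{\L^1}$ in the stability estimate; the commutators with $\grad^{j}E_f$ and the weight appearing here are instead handled by direct convolution estimates for $\nabla K*\partial^\ell\rho_f$ (yielding the $L^{3\pm\eps}$ norms) and by \cite[Proposition~6.5]{chong_manybody_2021} for the $\L^q$ commutator bound, the latter also requiring $q\ge 2$.
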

	
	\begin{remark}
		To obtain the result of Theorem~\ref{thm:main}, we need bounds on the norm of $\tilde{v}$ in $\cW^{2,2}\cap \cW^{1,3\pm\eps}(\weight{\opp}^{2n})$ with $2n>2$. In particular, by the inequalities in~\cite{lafleche_strong_2021}, these norms are bounded for the initial data $\tilde{v}^\init = \tilde{\op}_{\sqrt{f^\init}}$ if 
		\begin{equation*}
			\Nrm{\sqrt{f^\init}}{H^2\cap W^{1,3\pm\eps}_{2n}} +
			\hbar^2 \Nrm{\sqrt{f^\init}}{W^{4,3\pm\eps}}
		\end{equation*}
		is bounded uniformly in $\hbar$.
	\end{remark}
	 
	\begin{proof}
		By the Jacobi identity, we arrive at the equation
		\begin{equation}\label{eq:qgrad_x-equation}
		    \bd_t \Dhx^k \tilde{v} = \tfrac{1}{i\hbar} \com{H_f, \Dhx^{k} \tilde{v}} -\sum^k_{j=1}\binom{k}{j}\tfrac{1}{i\hbar}\com{\Dhx^{j-1} E_f, \Dhx^{k-j} \tilde{v}}.
		\end{equation}
		Likewise, using the fact that $\Dhv^j H_f = 0$ for $j\ge 3$, we have that 
		\begin{equation*}
		    \bd_t \Dhv^k \tilde{v} = \tfrac{1}{i\hbar} \com{H_f, \Dhv^k \tilde{v}} - k\,\Dhv^{k-1}\Dhx \tilde{v}
		\end{equation*}
		and
		\begin{multline*}
		    \bd_t \Dhx^\ell \Dhv^{k-\ell} \tilde{v} = \tfrac{1}{i\hbar} \com{H_f, \Dhx^\ell \Dhv^{k-\ell} \tilde{v}} - \(k-\ell\)\Dhv^{k-\ell-1}\Dhx^{\ell+1} \tilde{v}
		    \\
		    -\sum^\ell_{j=1}\binom{\ell}{j}\tfrac{1}{i\hbar} \com{\Dhx^{j-1} E_f, \Dhx^{\ell-j} \Dhv^{k-\ell} \tilde{v}}
		\end{multline*}
		where $\ell <k$. Let us focus on propagating moments for $\Dhx^k \tilde{v}$ since Equation~\eqref{eq:qgrad_x-equation} only depends on $\Dhx^\ell \tilde{v}$ for $\ell \le k$. Define a weight equivalent to $\opm$ by $\widetilde \opm= 1+\sum^3_{j=1}\bp^{2n}_\ii$ as in~\cite[Lemma~6.3]{chong_manybody_2021}. Then, by~\cite[Lemma~6.2]{chong_manybody_2021}, we arrive at the estimate 
		\begin{equation*}
		    \dt\nrm{\Dhx^k \tilde{v}\, \widetilde \opm}_{q} \le \frac{1}{\hbar }\nrm{\com{V_f, \widetilde \opm} \Dhx^k \tilde{v}}_{q}+\frac{1}{\hbar}\sum^k_{j=1}\binom{k}{j}\nrm{\com{\Dhx^{j-1}E_f, \Dhx^{k-j} \tilde{v}} \widetilde\opm}_{q}
		\end{equation*}
		for $q\ge 2$. Let us estimate the first term. To simplify the notation, we write $V=V_f$, $E=E_f$, and $\opmu = \Dhx^{k} \tilde{v}$. Recall the identity $\com{V, \bp} = i\hbar\, E$. Then it follows 
		\begin{equation*}
		    \frac{1}{i\hbar} \com{V, \bp^{2n}_{\ii}} = \sum^{2n-1}_{k=0} \bp^k_\ii\, E_\ii\, \bp_\ii^{2n-1-k} =\sum^{2n-1}_{k=0} \sum^k_{\ell = 0} \binom{k}{\ell} g_\ell\, \bp_\ii^{2n-1-\ell}
		\end{equation*}
		where $g_\ell = \(-i\hbar\)^\ell \bd_\ii^\ell E_\ii$. Using the above identity yields the estimate 
		\begin{align*}
		    \frac{1}{\hbar} \Nrm{\com{V, \bp_\ii^{2n}} \opmu}{\cL^q} 
		    &\le \sum^{2n-1}_{\ell =0} \binom{2n}{\ell+1} \Nrm{g_\ell}{L^\infty} \Nrm{\opmu\,\opm}{\cL^q}.
		\end{align*}
		Notice that 
		\begin{equation*}
		    \Nrm{g_\ell}{L^\infty} = \hbar^\ell \Nrm{\grad K * \bd_\ii^{\ell} \rho_f}{L^\infty} \leq C \,\hbar^\ell \Nrm{\bd_\ii^{\ell} \rho_f}{L^{3\pm\eps}}
		\end{equation*}
		for $\eps\in(0,1)$, then it follows that 
		\begin{equation*}
		    \frac{1}{\hbar} \Nrm{\com{V, \bp_\ii^{2n}} \opmu}{\cL^q} \leq C_{f}(t) \(\tfrac{(1+\hbar)^{2n}-1}{\hbar}\) \Nrm{\opmu\,\opm}{\cL^q}.
		\end{equation*}
		Let us now estimate the second term. Write $\opmu_{k-j} = \Dhx^{k-j} \tilde{v}$. Using the fact that 
		\begin{align*}
		    \com{\Dhx^{j-1}E, \opmu_{k-j}} \bp_\ii^{2n} = \com{\Dhx^{j-1}E, \opmu_{k-j}\, \bp_\ii^{2n}} - \opmu_{k-j} \com{\Dhx^{j-1}E, \bp_\ii^{2n}}
		\end{align*}
		then it follows 
		\begin{multline*}
		    \frac{1}{\hbar}\nrm{\com{\Dhx^{j-1}E, \opmu_{k-j}} \bp_\ii^{2n}}_{\cL^q}
		    \\
		    \le \frac{1}{\hbar}\nrm{\com{\Dhx^{j-1}E, \opmu_{k-j}\, \bp_\ii^{2n}}}_{\cL^q}+\frac{1}{\hbar}\nrm{\opmu_{k-j}\com{\Dhx^{j-1}E,  \bp_\ii^{2n}}}_{\cL^q}=:I_1+I_2.
		\end{multline*}
		By \cite[Proposition~6.5]{chong_manybody_2021}, we have that 
		\begin{equation*}
		    I_1 \le C\Nrm{\grad^{j-1}\rho_f}{W^{1,r}}\Nrm{\Dhv\opmu_{k-j}\, \bp_\ii^{2n}}{\cL^q}\quad \text{ with } \quad \frac{1}{r}=\frac{1}{2}-\frac{1}{q} < \frac{1}{3}
		\end{equation*}
		and $k>j$. Term $I_2$ is handled in the same manner as in the case of $\tfrac{1}{i\hbar}\com{V, \bp_\ii^{2n}} \opmu$, which yields 
		\begin{equation*}
		    I_2 \le C_q \sup_{1\le \ell\le 2n} \nrm{\bd_\ii^{\ell} \rho_f}_{L^{3\pm\eps}} \(\tfrac{(1+\hbar)^{2n}-1}{\hbar}\) \Nrm{\opmu\,\opm}{\cL^q}.
		\end{equation*}
		Hence it follows 
		\begin{subequations}\label{est:v2_propagation_of_regularity}
		\begin{equation}
			\dt \Nrm{\Dhx^k \tilde{v}\, \widetilde \opm}{q} \le C_{q, f, n}(t) \(\Nrm{\Dhx^k \tilde{v}\, \widetilde \opm}{q} + \sum^k_{j=1}\binom{k}{j} \Nrm{\Dhv\Dhx^{k-j} \tilde{v}\, \widetilde \opm}{q} \).
		\end{equation}
		In fact, recycling the above argument yields the estimates 
		\begin{equation}
		    \dt \Nrm{\Dhv^k \tilde{v}\, \widetilde \opm}{q} \leq\, C_{q, f, n}(t)\( \nrm{\Dhv^k \tilde{v}\, \widetilde \opm}_{q}+\nrm{\Dhv^{k-1} \Dhx \tilde{v}\, \widetilde \opm}_{q} \)
		\end{equation}
		and 
		\begin{multline}
			\dt\Nrm{\Dhx^\ell \Dhv^{k-\ell} \tilde{v}\, \widetilde \opm}{q}
			\le C_{q, f, n}(t) \bigg( \Nrm{\Dhx^\ell \Dhv^{k-\ell} \tilde{v}\, \widetilde \opm}{q}
			\\
		    + (k-\ell) \Nrm{\Dhv^{k-1-\ell} \Dhx^{\ell+1} \tilde{v}\, \widetilde \opm}{q} + \sum^\ell_{j=1} \binom{\ell}{j} \Nrm{\Dhx^{\ell-j} \Dhv^{k-\ell} \tilde{v}\, \widetilde \opm}{q} \bigg).
		\end{multline}
		\end{subequations}
		Combining the above estimates~\eqref{est:v2_propagation_of_regularity} gives us a bound of the form
		\begin{equation*}
		    \dt \Nrm{\tilde{v}(t)}{\cW^k(\opm)} \le C_{q, f, n}'(t)  \Nrm{\tilde{v}(t)}{\cW^k(\opm)}
		\end{equation*}
		which then by Gr\"onwall's lemma leads to the result.
	\end{proof}
	\medskip
	
	{\bf Acknowledgments.} J.C. was supported by the NSF through the RTG grant DMS- RTG 184031. C.S. acknowledges the NCCR SwissMAP and the support of the SNSF through the Eccellenza project PCEFP2\_181153.


\bibliographystyle{abbrv} 
\bibliography{Vlasov}

\end{document}